\documentclass[12pt]{amsart}

\usepackage{amssymb}
\usepackage{amsmath,amssymb,amsfonts,amsthm,latexsym}
\usepackage{mathrsfs}
\usepackage{color}
\usepackage{eucal}
\usepackage[all]{xypic}
\usepackage{xspace}
\usepackage[round]{natbib}
\usepackage{graphicx}
\usepackage{bbm}

\usepackage{graphics, amscd, amsfonts, epsfig, eepic, epic, psfrag, setspace}
\usepackage{xspace}


\newtheorem{theorem}{Theorem}
\newtheorem{corollary}{Corollary}

\newtheorem{lemma}{Lemma}

\numberwithin{equation}{section}

\usepackage{natbib}

\begin{document}
\doublespacing

\begin{center}
{\bf\Large Combinatorics of $\gamma$-structures}
\\
\vspace{15pt} Hillary S. W. Han$^1$, Thomas J. X. Li$^2$, Christian M. Reidys$^{\,\star}$
\end{center}

\begin{center}
         Department of Mathematics and Computer Science  \\
         University of Southern Denmark, Campusvej 55, \\
         DK-5230, Odense M, Denmark \\
         $^1$Phone: 45-20369907 \\
         $^1$email: hillaryswhan@gmail.com \\
         $^2$Phone: 45-91574526 \\
         $^2$email: thomasli@imada.sdu.dk\\
         Phone$^{\,\star}$: 45-24409251 \\
         email$^{\,\star}$: duck@santafe.edu \\
         Fax: 45-65502325 \\
         
\end{center}

\newpage
\centerline{\bf Abstract}
In this paper we study canonical $\gamma$-structures, a class of RNA
pseudoknot structures that plays a key role in the context of polynomial
time folding of RNA pseudoknot structures.
A $\gamma$-structure is composed by specific building blocks, that have
topological genus less than or equal to $\gamma$, where composition means
concatenation and nesting of such blocks. Our main result is the derivation
of the generating function of $\gamma$-structures via symbolic enumeration
using so called irreducible shadows. We furthermore recursively compute the
generating polynomials of irreducible shadows of genus $\le \gamma$.
$\gamma$-structures are constructed via $\gamma$-matchings. For $1\le
\gamma \le 10$, we compute Puiseux-expansions at the unique, dominant
singularities, allowing us to derive simple asymptotic formulas for the
number of $\gamma$-structures.

{\bf Keywords}: Generating function, Shape, Irreducible shadow, $\gamma$-structure.

\newpage
\section{Introduction and background}
\label{}

An RNA sequence is a linear, oriented sequence of the nucleotides (bases)
{\bf A,U,G,C}. These sequences ``fold'' by establishing bonds between
pairs of nucleotides. These bonds cannot form arbitrarily, a nucleotide can
at most establish one bond and the global conformation of an RNA molecule
is determined by topological constraints encoded at the level of secondary
structure, i.e., by the mutual arrangements of the base pairs \cite{Bailor:10}.

Secondary structures can be interpreted as (partial) matchings in a graph of
permissible base pairs \cite{Tabaska:98}. When represented as a diagram,
i.e.~as a graph whose vertices are drawn on a horizontal line with arcs in
the upper halfplane on refers to a secondary structure with crossing arcs
as a pseudoknot structure.

Folded configurations exhibit the stacking of adjacent base pairs and
specific minimum arc-length conditions \cite{Waterman:78aa}, where
a stack is a sequence of parallel arcs $((i,j),(i+1,j-1),\dots,
(i+\tau,j-\tau))$.

The topological classification of RNA structures \cite{Bon:08,topmatch} has
recently been translated into an efficient folding algorithm
\cite{gfold}. This algorithm {\it a priori} folds into a novel class of
pseudoknot structures, the $\gamma$-structures.
$\gamma$-structures differ from pseudoknotted RNA structures of fixed
topological genus of an associated fatgraph or double line graph
\cite{Orland:02} and \cite{Bon:08}, since they have arbitrarily high
genus. They are composed by irreducible subdiagrams whose individual genus
is bounded by $\gamma$ and contain no bonds of length one ($1$-arcs),
see Section~\ref{S:facts} for details.

In \cite{NebelWeinberg} Nebel and Weinberg study a plethora of RNA structures.
The authors study asymptotic expansions for $\gamma=1$ and find that in the
limit of large $n$ there are $j_1\,n^{-\frac{3}{2}} (\varrho_{1,1}^{-1})^{n}$,
$1$-structures, where $\varrho_{1,1}^{-1}=3.8782$ and $j_1$ is some positive
constant.

In this paper we study canonical $\gamma$-structures, i.e.~partial matchings
composed by irreducible motifs of genus $\le \gamma$, without isolated arcs
and $1$-arcs. These motifs are called  irreducible shadows.
We first establish a functional relationship between the generating
function of $\gamma$-matchings and that of irreducible shadows.
Via this relation, we identify a polynomial $P_\gamma(u,X)$, whose
unique solution equals the generating function of $\gamma$-matchings.
We then derive a recurrence of the generating function of irreducible shadows
using Harer-Zagier recurrence \cite{Harer:86}.
The generating function of $\gamma$-matchings is then expanded at its unique
dominant singularity as a Puiseux-series. This implies, by means of
transfer theorems \cite{Flajolet:07a}, simple asymptotic formulas for the
numbers of $\gamma$-matchings.

$\gamma$-matchings are the stepping stone to derive via Lemma~\ref{L:GFbull}
the further refined, bivariate generating function of $\gamma$-shapes,
i.e.~$\gamma$-matchings containing only stacks composed by a single arc.
This generating function keeps additionally track of the $1$-arcs, that are
vital for the later inflation into $\gamma$-structures. We then
compute the generating function of $\tau$-canonical $\gamma$-structures
inflating $\gamma$-shapes by means of symbolic enumeration.


\newpage
\section{Some basic facts}\label{S:facts}

\subsection{ $\gamma$-diagrams}

A \emph{diagram} is a labeled graph over the vertex set $[n]=\{1, \dots, n\}$ in
which each vertex has degree $\le 3$, represented by drawing its vertices
in a horizontal line. The backbone of a diagram is the sequence of
consecutive integers $(1,\dots,n)$ together with the edges $\{\{i,i+1\}
\mid 1\le i\le n-1\}$. The arcs of a diagram, $(i,j)$, where $i<j$, are
drawn in the upper half-plane. We shall distinguish the backbone edge
$\{i,i+1\}$ from the arc $(i,i+1)$, which we refer to as a $1$-arc.

A \emph{stack} of length $\tau$ is a maximal sequence of
``parallel'' arcs,
$$
((i,j),(i+1,j-1),\dots,(i+\tau,j-\tau)).
$$
A stack of length $\ge \tau$ is called a $\tau$-canonical stack,
i.e.~a stack of length zero is an isolated arc.
The particular arc $(1,n)$ is called a rainbow and an arc is called
\emph{maximal} if it is maximal with respect to the partial order
$(i,j)\le (i',j')$ iff $i'\le i\;\wedge j\le j'$, see Fig.~\ref{F:stack}.


\begin{figure}
  \includegraphics[width=0.95\textwidth]{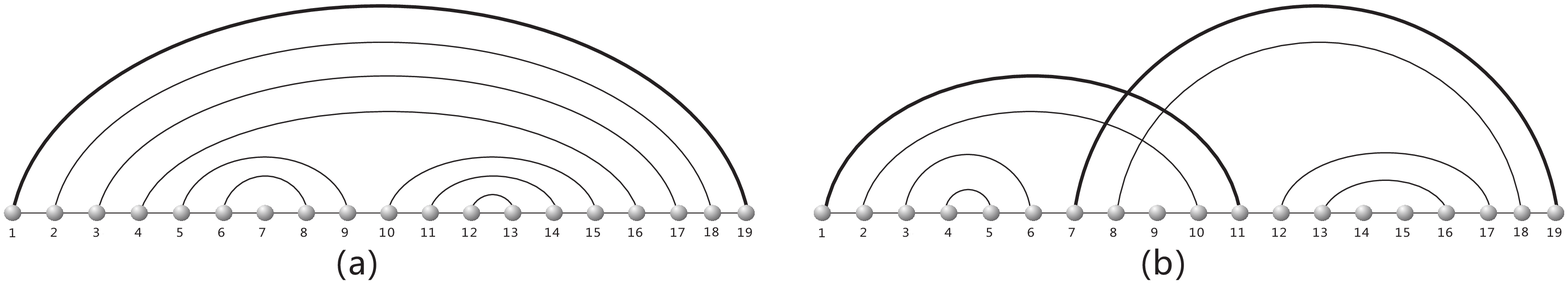}
\caption{\small $(a)$ a diagram containing a rainbow ({\bf bold}), three
stacks $((5,9), (6,8))$, $((10,15), (11,14), (12,13))$, and
$((1,19), (2, 18), (3,17),(4,16))$. $(b)$ the maximal arcs of a diagram
displayed in ({\bf bold}).}
\label{F:stack}      
\end{figure}

A stack of length $\tau$,
$((i,j),(i+1,j-1),\dots,(i+\tau,j-\tau))$ induces a sequence
of pairs $(([i, i+1],[j, j-1]),([i+1,i+2],[j-1,j-2])
\dots([i+\tau-1,i+\tau],[j-\tau,j-\tau+1]))$.
We call any of these $2\tau$ intervals a \emph{$P$-interval}.
The interval $[i+\tau,j-\tau]$ is called a \emph{$\sigma$-interval},
see Fig.~\ref{F:distinct}.


\begin{figure}
  \includegraphics[width=0.85\textwidth]{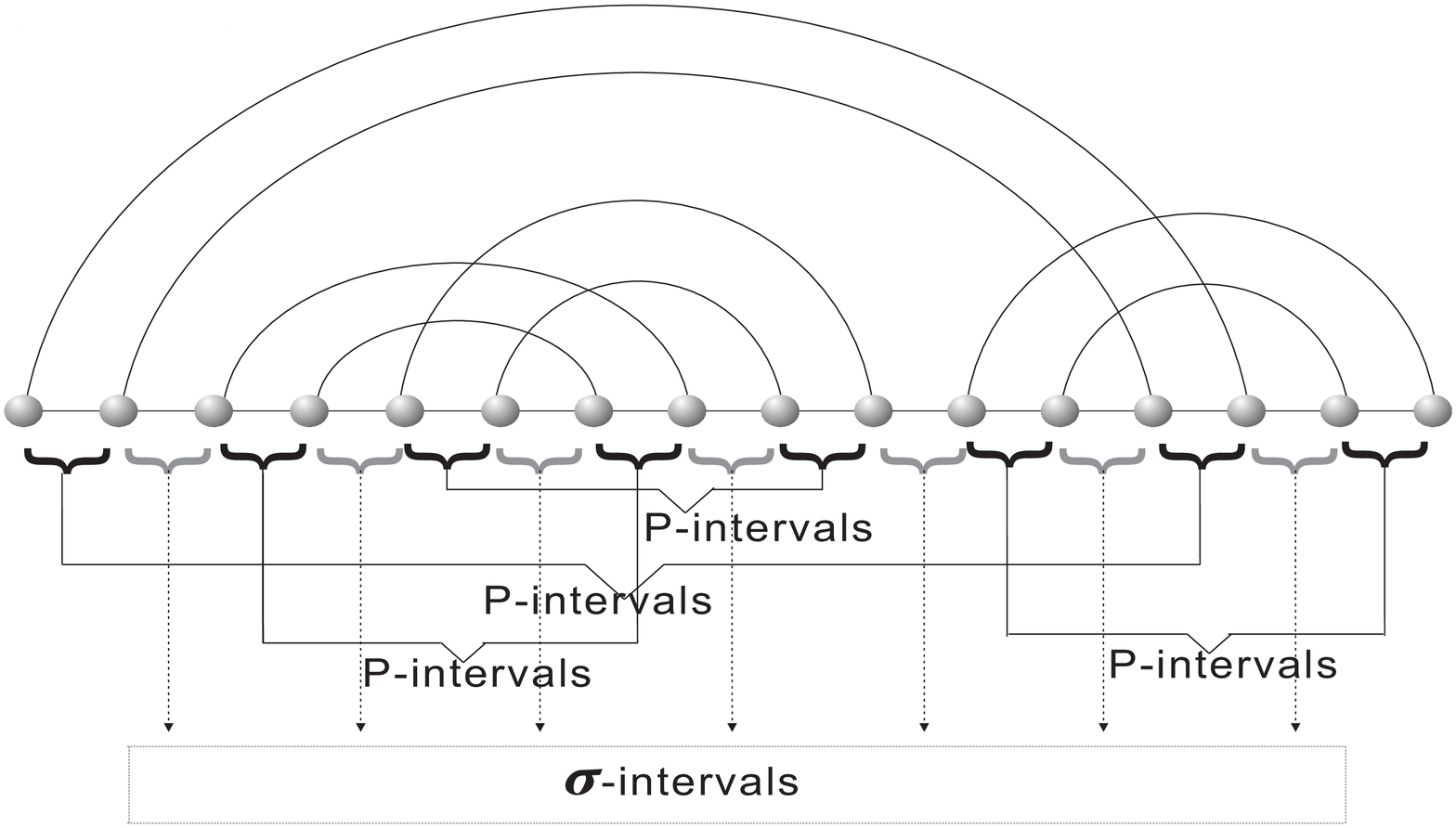}
\caption{\small $\sigma$- and $P$-intervals.}
\label{F:distinct}      
\end{figure}

We shall consider diagrams as fatgraphs, $\mathbb{G}$, that is graphs $G$
together with a collection of cyclic orderings, called fattenings, one such
ordering on the half-edges incident on each vertex.
Each fatgraph $\mathbb{G}$ determines an oriented surface $F(\mathbb{G})$
\cite{Loebl:08,Penner:10} which is connected if $G$ is and has some
associated genus $g(G)\ge 0$ and number $r(G)\ge 1$ of boundary components.
Clearly, $F(\mathbb{G})$ contains $G$ as a deformation retract \cite{Massey:69}.
Fatgraphs were first applied to RNA secondary structures in
\cite{Waterman:93} and \cite{Penner:03}.

A diagram $\mathbb{G}$ hence determines a unique surface $F(\mathbb{G})$
(with boundary). Filling the boundary components with discs we can
pass from $F(\mathbb{G})$ to a surface without boundary. Euler
characteristic, $\chi$, and genus, $g$, of this surface is given by
$\chi =  v - e + r$ and $g  =  1-\frac{1}{2}\chi$, respectively, where
$v,e,r$ is the number of discs, ribbons and boundary components in
$\mathbb{G}$, \cite{Massey:69}.
The \emph{genus} of a diagram is that of its associated surface without boundary.

The \emph{shadow} of a diagram of genus $g$ is obtained by removing all noncrossing
arcs, deleting all isolated vertices and collapsing all induced stacks
(i.e., maximal subsets of subsequent, parallel arcs) to single arcs, see
Fig.~\ref{F:shadow}. We denote shadows by $\sigma$.

\begin{figure}
  \includegraphics[width=0.95\textwidth]{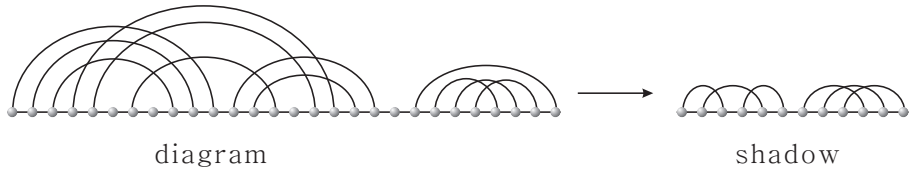}
\caption{\small Shadows: the shadow is obtained by removing all noncrossing
arcs and isolated points and collapsing all stacks and resulting
stacks into single arcs.}
\label{F:shadow}      
\end{figure}

The shadow of a diagram $\mathbb{G}$, $\sigma(\mathbb{G})$, can possibly
be empty. Furthermore, projecting into the shadow does not affect genus.
Any shadow of genus $g$ over one backbone contains at least $2g$ and at most
$(6g-2)$ arcs. In particular, for fixed genus $g$, there exist only finitely
many shadows \cite{gfold,fenix2bb}. In Fig.~\ref{F:4shadow}, we display
the four shadows of genus one.
\begin{figure}
  \includegraphics[width=0.50\textwidth]{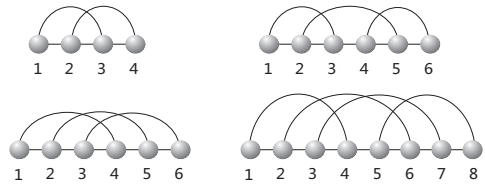}
\caption{\small The four shadows of genus one.}
\label{F:4shadow}      
\end{figure}


A diagram is called \emph{irreducible}, if and only if for any two arcs,
$\alpha_1,\alpha_k$ contained in $E$, there exists a sequence of
arcs $(\alpha_1,\alpha_2,\dots,\alpha_{k-1},\alpha_k)$
such that $(\alpha_i,\alpha_{i+1})$ are crossing. Irreducibility
is equivalent to the concept of primitivity introduced by
\cite{Bon:08}, inspired by the work of \cite{Dyson:49b}.
According to
\cite{fenix2bb}, for arbitrary genus $g$ and $2g\le\ell\le (6g-2)$, there
exists an irreducible shadow of genus $g$ having exactly $\ell$ arcs.
We may reuse Fig.~\ref{F:4shadow} as an illustration of this result since
the four shadows of genus one are all irreducible.

Let ${\bf i}_g(m)$ denote the number of irreducible shadows of genus $g$
with $m$ arcs. Since for fixed genus $g$ there exist only finitely many
shadows we have the generating polynomial of irreducible shadows of genus
$g$
$$
{\bf I}_g(z)=\sum_{m=2g}^{6g-2}\,{\bf i}_g(m)z^m.
$$
For instance for genus $1$ and $2$ we have
\begin{eqnarray*}
{\bf I}_1(z) &=& {z}^{2} \left( 1+z \right) ^{2},\\
{\bf I}_2(z) &=& {z}^{4} \left( 1+z \right) ^{4} \left( 17+92\,z+96\,{z}^{2} \right).
\end{eqnarray*}

The shadow $\sigma(\mathbb{G})$ of a diagram $\mathbb{G}$ decomposes into
a set of irreducible shadows. We shall call these shadows \emph{irreducible
$\mathbb{G}$-shadows}.

Any diagram $\mathbb{G}$ can iteratively be decomposed by first removing
all noncrossing arcs as well as isolated vertices, second collapsing any
stacks and third by removing irreducible $\mathbb{G}$-shadows
iteratively as follows, see Fig.~\ref{F:decomposition}: \\
$\bullet$ one removes (i.e.~cuts the backbone at two points and after
          removal merges the cut-points) irreducible $\mathbb{G}$-shadows
          from bottom to top, i.e.~such that there exists no irreducible
          $\mathbb{G}$-shadow that is nested within the one previously
          removed. \\
$\bullet$ if the removal of an irreducible $\mathbb{G}$-shadow induces the
          formation of a stack, it is collapsed into a single arc.\\

\begin{figure}
  \includegraphics[width=0.95\textwidth]{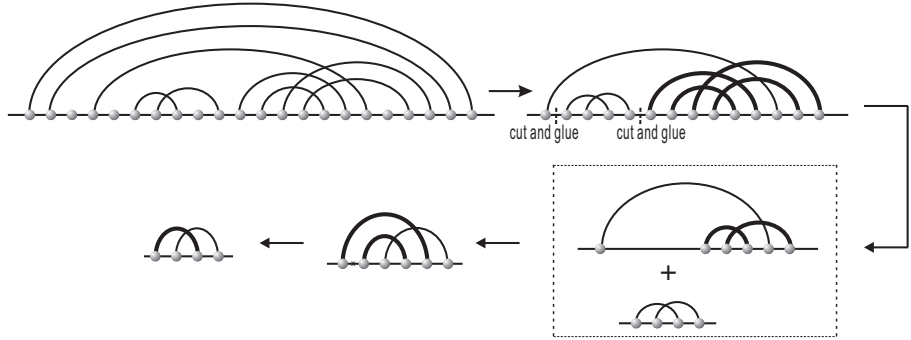}
\caption{\small A diagram $\mathbb{G}$ is decomposed:
we remove any noncrossing arcs and isolated points, collapse any
stacks into a single arcs and finally remove irreducible $\mathbb{G}$-shadows
from bottom to top and collapsing any stack generated in the process into a
single arc.}
\label{F:decomposition}     
\end{figure}


A diagram, $\mathbb{G}$, is a \emph{$\gamma$-diagram} if and only if for any
irreducible $\mathbb{G}$-shadow, $\mathbb{G}'$, $g(\mathbb{G}')\le
\gamma$ holds.

We denote the set of $\tau$-canonical $\gamma$-diagrams by
$\widetilde{\mathcal{G}}_{\tau,\gamma}$. Such a diagram without arcs of
the form $(i,i+1)$ ($1$-arcs) is called a \emph{$\tau$-canonical
$\gamma$-structure} and their set is denoted by $\mathcal{G}_{\tau,\gamma}$.
A \emph{$\gamma$-matching} is a $\gamma$-diagram that contains only vertices
of degree three.
A \emph{$\gamma$-shape} is a $\gamma$-matching that contains only stacks of
length zero.
Let $\mathcal{H}_{\gamma}$ and $\mathcal{S}_\gamma$ denote
the set of $\gamma$-matchings and $\gamma$-shapes, respectively.

\subsection{Some generating functions}

In this paper we denote the ring of polynomials over a ring $R$ by
$R[X]$ and the ring of formal power series $\sum_{n\ge 0}a_nX^n$ by
$R[[X]]$. $R[[X]]$ is a local ring with maximal ideal $(X)$,
i.e.~any power series with nonzero constant term is invertible.
A Puiseux series \cite{C.T.C.Wall} is power series in fractional
powers of $X$, i.e.~$\sum_{n\ge 0}a_n{X^{n/k}}$ for some fixed
$k\in\mathbb{N}$.

We denote the generating functions of a set of diagrams $\mathcal{D}$
filtered by the number of arcs ${\bf D}(z)=\sum_{2n\ge 0}{\bf d}(n)z^n$.
Similarly, a generating function of diagrams filtered by the length of
the backbone is written as ${\bf D}(z)=\sum_{n\ge 0}{\bf d}(n)z^n$.
In particular, the generating functions of $\gamma$-matchings
and $\tau$ canonical $\gamma$-structures are given by
\begin{equation*}
{\bf H}_{\gamma}(u) = \sum_{2n\ge 0}{\bf h}_\gamma(n) u^n,\quad
{\bf G}_{\tau,\gamma}(z) = \sum_{n\ge 0}{\bf g}_{\tau,\gamma}(n) z^n.
\end{equation*}
Let $\mathcal H_{\gamma}(n,m)\supseteq \mathcal S_{\gamma}(n,m)$ denote the
collections of all $\gamma$-matchings and $\gamma$-shapes on $2n\geq
0$ vertices containing $m\geq 0$ $1$-arcs with generating functions
$$
{\bf H}_{\gamma}(x,y)=\sum_{m,2n\geq 0}{\bf h}_{\gamma}(n,m)x^ny^m,\quad
{\bf S}_{\gamma}(x,y)=\sum_{m,2n\geq 0}{\bf s}_{\gamma}(n,m)x^ny^m,
$$
where ${\bf h}_{\gamma}(n,m)={\bf s}_{\gamma}(n,m)=0$ if $2\gamma>n$
or if $m>n$.

Furthermore there is a natural projection $\vartheta$ from $\gamma$-matchings to
$\gamma$-shapes defined by collapsing each non-empty stack onto a single arc
$$
\vartheta\colon {\mathcal H}_\gamma\to  {\mathcal S}_\gamma,
$$
which is surjective and preserves irreducible shadows as well as the
number of $1$-arcs. $\vartheta$ restricts to a surjection
$$
\vartheta: \sqcup_{n\geq 0} {\mathcal H}_\gamma(n,m)\to\sqcup_{n\geq 0}
{\mathcal S}_\gamma(n,m),
$$
which collapses each stack to an arc and preserves any irreducible shadow
and also the number $m$ of $1$-arcs.


\newpage
\section{Combinatorics of $\gamma$-matchings}\label{S:combi}

In this section we study $\gamma$-matchings.

\begin{theorem}\label{T:H(u)}
Let $R=\mathbb{Z}[u]$. Then the following assertions hold:\\
{\bf (a)} the generating function of $\gamma$-matchings,
${\bf H}_{\gamma}(u)$, satisfies
\begin{equation}\label{E:canonical-stru}
{\bf H}_{\gamma}(u)^{-1}
 = 1- \left(u\,{\bf H}_{\gamma}(u)
 + {\bf H}_{\gamma}(u)^{-1}\sum_{g\le \gamma}\,
 {\bf I}_g\left(\frac{u\,{\bf H}_{\gamma}^{2}(u)}
 {1-u \,{\bf H}_{\gamma}^{2}(u)}\right) \right),
\end{equation}
equivalently,
\begin{equation*}
{\bf H}_{\gamma}(u)- u \,{\bf H}_{\gamma}(u)^2-
\sum_{g\le \gamma}\, {\bf I}_g\left(\frac{u\,{\bf H}_{\gamma}^{2}(u)} {1-u \,{\bf H}_{\gamma}^{2}(u)}\right)
=1.
\end{equation*}

In particular, there exists a polynomial $P_\gamma(u,X)\in R[X]$
of degree $(12\gamma -2)$, whose coefficients are sums
of ${\bf I}_g(z)$ coefficients, such that
$P_\gamma(u,{\bf H}_{\gamma}(u))=0$.\\
{\bf (b)} eq.~(\ref{E:canonical-stru}) determines ${\bf H}_{\gamma}(u)$
uniquely.
\end{theorem}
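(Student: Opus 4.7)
The plan is to derive (a) by symbolic enumeration using the iterative shadow-peeling decomposition described in Section~\ref{S:facts}, and (b) by a standard $u$-adic fixed-point argument. For (a), I would first observe that any $\gamma$-matching decomposes uniquely as a sequence of top-level primitive pieces along the backbone, where a primitive piece is either a single noncrossing arc enclosing an arbitrary $\gamma$-matching (contributing $u\,{\bf H}_\gamma(u)$) or a top-level inflated irreducible shadow of genus $1\le g\le\gamma$. Writing $P$ for the generating function of a single primitive piece, the sequence translation gives ${\bf H}_\gamma(u)=1/(1-P)$, i.e.\ ${\bf H}_\gamma^{-1}=1-P$, so it remains to compute the contribution of each shadow piece. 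Fix an irreducible shadow $\sigma$ of genus $g$ with $m$ arcs; I claim that the $\gamma$-matchings whose top-level irreducible component is $\sigma$ are enumerated by choosing a stack length $\tau_i\ge 0$ for each shadow arc (replacing it by $\tau_i+1$ parallel arcs) together with an arbitrary $\gamma$-matching inserted into each of the $2M-1$ internal gaps of the resulting configuration on $2M=2m+2\sum\tau_i$ vertices. Granting this bijection, one shadow arc contributes $\sum_{\tau\ge 0}u^{\tau+1}{\bf H}_\gamma^{2\tau}=u/(1-u{\bf H}_\gamma^2)$ (the stack itself plus insertions in its $2\tau$ $P$-intervals), while the remaining $2m-1$ shadow-vertex gaps contribute ${\bf H}_\gamma^{2m-1}$; multiplying across arcs and summing over shadows of genus $g$ yields ${\bf H}_\gamma^{-1}\,{\bf I}_g\bigl(\tfrac{u{\bf H}_\gamma^2}{1-u{\bf H}_\gamma^2}\bigr)$, and substituting into ${\bf H}_\gamma^{-1}=1-P$ reproduces~(\ref{E:canonical-stru}).

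To extract the annihilating polynomial $P_\gamma$, I would multiply the equivalent form ${\bf H}_\gamma-u{\bf H}_\gamma^2-\sum_{g\le\gamma}{\bf I}_g\bigl(\tfrac{u{\bf H}_\gamma^2}{1-u{\bf H}_\gamma^2}\bigr)=1$ through by $(1-uX^2)^{6\gamma-2}$; since $\deg{\bf I}_g\le 6g-2\le 6\gamma-2$, each shadow summand becomes $\sum_m{\bf i}_g(m)(uX^2)^m(1-uX^2)^{6\gamma-2-m}\in\mathbb{Z}[u,X]$, which displays the coefficients of $P_\gamma$ as explicit sums of ${\bf I}_g$ coefficients. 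The leading contribution $(X-uX^2-1)(1-uX^2)^{6\gamma-2}$ has $X$-degree $2+2(6\gamma-2)=12\gamma-2$ with nonvanishing leading coefficient $\pm u^{6\gamma-1}$, whereas each shadow term has $X$-degree at most $12\gamma-4$, pinning the $X$-degree of $P_\gamma$ to $12\gamma-2$.

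For (b), I would invoke the $u$-adic contraction principle. Rewriting~(\ref{E:canonical-stru}) as ${\bf H}_\gamma=F(u,{\bf H}_\gamma)$ with $F(u,X)=1+uX^2+\sum_{g\le\gamma}{\bf I}_g\bigl(\tfrac{uX^2}{1-uX^2}\bigr)$, every nonconstant term of $F$ in $X$ carries a factor of $u$; therefore, if $X,X'\in\mathbb{Z}[[u]]$ satisfy $X(0)=X'(0)=1$ and $X\equiv X'\pmod{u^n}$, then $F(u,X)\equiv F(u,X')\pmod{u^{n+1}}$. Two solutions that agreed $\pmod{u^n}$ would therefore agree $\pmod{u^{n+1}}$, forcing equality; equivalently, iterating from $X^{(0)}=1$ produces a Cauchy sequence that converges in the $u$-adic topology to the unique fixed point ${\bf H}_\gamma(u)$. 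The principal obstacle throughout is the bijection claim in the inflation step: one must verify that inserting arbitrary $\gamma$-matchings into every one of the $2M-1$ gaps (including the $P$-intervals of the stacks) preserves both the top-level shadow $\sigma$ and its stack lengths, so that all inserted arcs are absorbed during the noncrossing-removal and stack-collapse phases of the shadow operation and surface only at deeper levels of the iterative decomposition of Section~\ref{S:facts}.
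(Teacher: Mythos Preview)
Your proposal is correct and follows essentially the same combinatorial decomposition as the paper: a $\gamma$-matching is a sequence of primitive blocks, each either a rainbow over an arbitrary $\gamma$-matching or an inflated irreducible shadow, and the per-shadow contribution $[u/(1-u{\bf H}_\gamma^2)]^m\,{\bf H}_\gamma^{2m-1}$ is obtained exactly as in the paper by inflating shadow arcs to stacks and filling the $P$- and $\sigma$-intervals. Your derivation of $P_\gamma$ by clearing $(1-uX^2)^{6\gamma-2}$ and your $u$-adic contraction for uniqueness are the same arguments the paper gives, only phrased more compactly; the paper routes the construction through the auxiliary classes $\mathcal{V}_\sigma,\mathcal{L}_\sigma,\mathcal{F}_\sigma,\mathcal{W}_\sigma,\mathcal{W}_M$ before reaching ${\bf H}_\gamma$, whereas you go directly (in the spirit of the paper's own Theorem~\ref{T:CIrelation} proof), but the underlying bijection and the point you flag as the principal obstacle are identical.
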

\begin{proof}
We first prove {\bf (a)}.
Let $\sigma$ be a fixed irreducible shadow of genus $g$ having $m$ arcs. Let
$\mathcal{V}_\sigma$ be the set of diagrams, generated by concatenating and
nesting $\sigma$.

{\it Claim 1:}
$$
{\bf V}_\sigma(u)= (1-{\bf V}_\sigma(u)^{-1}(u\,{\bf V}_\sigma(u)^{2})^m)^{-1}.
$$
To prove Claim $1$ we consider a $\mathcal{V}_\sigma$-diagram. Clearly, its
maximal arcs are contained in $t\ge 1$ copies of $\sigma$. These arcs induce
exactly $(2m-1)t$ $\sigma$-intervals, in each of which we find again an
element of $\mathcal{V}_\sigma$, whence
$$
{\bf V}_\sigma(u)=\sum_{t\ge 0} (u^m{\bf V}_\sigma(u)^{2m-1})^t
$$
and Claim $1$ follows.

Let $\mathcal{L}_\sigma$ be the set of diagrams having the fixed shape
$\sigma$ obtained by inflating $\sigma$-arcs into stacks, or symbolically,
$\mathcal{U}\,\times\textsc{Seq}(\mathcal{U})$. Here $\mathcal{U}$ and
$\mathcal{R}=\textsc{Seq}(\mathcal{U})$ denote the classes of arcs and
sequences of arcs. Clearly, the associated generating function of
$\mathcal{U}\times \mathcal{R}$ is
$u(1-u)^{-1}$.

Note that each $\mathcal{L}_\sigma$-diagram contains exactly
$(2m-1)$ $\sigma$-intervals and an arbitrary number of pairs of
${P}$-intervals.
Let $\mathcal{F}_\sigma$ denote the set of diagrams generated by
concatenating and nesting $\mathcal{L}_\sigma$-diagrams that contain
no empty $P$-intervals. Let finally $\mathcal{W}_\sigma$ be the set of
$1$-canonical diagrams, having shapes in $\mathcal{F}_\sigma$.

{\it Claim $2$.}
\begin{equation}\label{E:lstructure}
{\bf W}_\sigma(u)^{-1}=
1-{\bf W}_\sigma(u)^{-1}\left(\frac{\frac{u}{1-u}\,{\bf W}_\sigma^2(u)}
{1-\frac{u}{1-u}\,({\bf W}_\sigma^2(u)-1)}\right)^m.
\end{equation}
We shall construct $\mathcal{W}_\sigma$ using arcs, $\mathcal{U}$,
sequences of arcs, $\mathcal{R}$, induced arcs, $\mathcal{N}$,
and sequence of induced arcs, $\mathcal{M}$.
The class $\mathcal{F}_\sigma$ is obtained by concatenating and nesting
$\mathcal{L}_\sigma$-diagrams that do not contain any empty $P$-intervals,
see Fig.~\ref{F:V-K}.

\begin{figure}
  \includegraphics[width=0.95\textwidth]{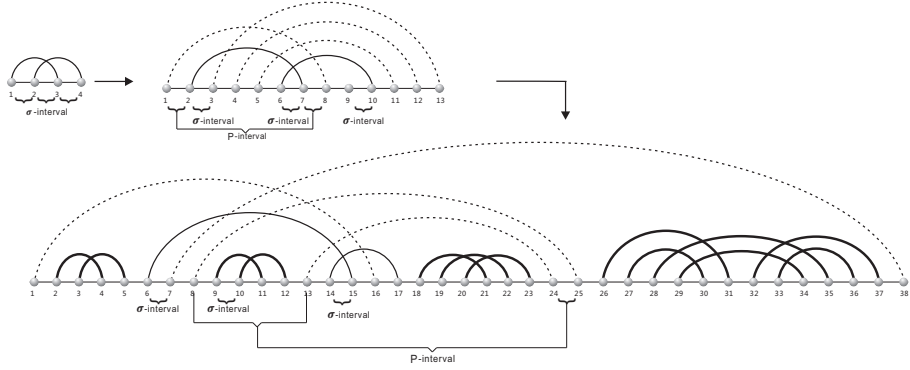}
\caption{\small First, a fixed irreducible shadow $\sigma$ is inflated into a
$\mathcal{L}_\sigma$-diagram, second we pass to an
$\mathcal{F}_\sigma$-diagram by inserting a nontrivial
$\mathcal{L}_\sigma$-diagram in one of the ${P}$-intervals.}
\label{F:V-K}      
\end{figure}

An induced arc, i.e.~an arc together with at
least one nontrivial $\mathcal{F}_\sigma$-diagram in either one or
in both ${P}$-intervals
\begin{equation*}
\mathcal{N} = \mathcal{U}\times
\left( (\mathcal{F}_\sigma-1)+(\mathcal{F}_\sigma-1)+
(\mathcal{F}_\sigma-1)^2\right)
=\mathcal{U}\times \left(\mathcal{F}_\sigma^2-1\right).
\end{equation*}
Clearly, we have for a single induced arc
$\mathbf{N}(u)=u \left({\bf F}_\sigma(u)^2-1 \right)$
and for a sequence of induced arcs, $\mathcal{M}=
\textsc{Seq}(\mathcal{N})$, where
\begin{eqnarray*}
{\bf M}(u) & = &  \frac{1}{1-u \left({\bf F}_\sigma(u)^2-1 \right)}.
\end{eqnarray*}
By construction, the maximal arcs of an $\mathcal{F}_\sigma$-diagram
coincide with those of its underlying $\mathcal{V}_\sigma$-diagram.
Therefore
\begin{eqnarray*}
\mathcal{F}_\sigma&=&\sum_{t \geq 0}((\mathcal{U}\times \mathcal{M})^m \,
\mathcal{F}_\sigma^{2m-1})^t
\end{eqnarray*}
with generating function
\begin{equation}\label{E:gf-F}
{\bf F}_\sigma(u)=\sum_{t \geq 0}\left(\left( \frac{u}{1-u({\bf F}_\sigma(u)^2-1)}
\right)^m \,{\bf F}_\sigma(u)^{2m-1}\right)^t.
\end{equation}

Next we inflate the arcs of the $\mathcal{F}_\sigma$-diagram into
stacks, $\mathcal{U}\,\times \mathcal{R}$.

This inflation process generates $\mathcal{W}_\sigma$-diagrams and
any $\mathcal{W}_\sigma$-diagram can be constructed from a unique
fixed irreducible shadow $\sigma$ of genus $g$ with $m$ arcs.
We have
\begin{equation}\label{E:gf-W}
{\bf W}_\sigma(u)=\sum_{t \geq 0}\left(\left( \frac{\frac{u}{1-u}}
{1- \frac{u}{1-u}({\bf W}_\sigma(u)^2-1)}\right)^m\,
{\bf W}_\sigma(u)^{2m-1}\right)^t,
\end{equation}
whence Claim $2$.\\

{\it Claim 3:}
Let $M$ be the set of irreducible shadows of genus $g\le \gamma$.
Then\\
\begin{equation}\label{E:combi-fla}
{\bf W}_M(u)^{-1}= 1-\sum_{g \leq \gamma}\sum_{1<m}
{{\bf i}_g(m)\,\bf W}_M(u)^{-1}\left(\frac{u\,
{\bf W}_M^2(u)}{1-u\,{\bf W}_M^2(u)}\right)^m.
\end{equation}
The maximal arcs of a $\mathcal{V}_M$-structure, partition into
the maximal arcs of $t$ concatenated irreducible shadows
$\sigma_1,\dots,\sigma_t$ and
\begin{equation}\label{E:multinomial}
\sum_{\{\sigma_{1},\dots,\sigma_{t}\}
\atop \sigma_{i}\in M} 1 =
\left( \sum_{g \leq \gamma}\sum_{1<m}{\bf i}_g(m) \right)^t.
\end{equation}
These maximal arcs induce exactly $(2m-1)\,t$ $\sigma$-intervals.
In each $\sigma$-interval, we find again an element of $\mathcal{V}_M$.
Thus for any $\sigma_i$ having $m$ arcs, we have $\mathcal{V}_M^{2m-1}$,
which leads to the term $u^m{\bf V}_M(u)^{2m-1}$. It remains to
sum over all $t$, i.e.~expressing all the decompositions of
$\mathcal{V}_{M_\gamma}$-structures into concatenated, irreducible shadows
and we obtain
\begin{equation}\label{E:combi-irredu}
{\bf V}_M(u)=\sum_{t\ge 0} \left(\sum_{g \leq \gamma}
\sum_{m>1} {\bf i}_g(m) u^m{\bf V}_M(u)^{2m-1}\right)^t.
\end{equation}
The passage to from ${\mathcal V}_M$ to ${\mathcal L}_M$ as well as that
from ${\mathcal L}_M$ to ${\mathcal F}_M$ follows from Claim $2$, whence
\begin{equation}\label{E:F_M}
{\bf F}_M(u)= \sum_{t \geq 0}\left(\sum_{g \leq \gamma}\sum_{m>1}
{{\bf i}_g(m)\,\bf F}_M(u)^{-1}\left(\frac{u\,
{\bf F}_M^2(u)}{1-u \,({\bf F}_M^2(u)-1)}\right)^m \right)^t.
\end{equation}
Here ${\bf F}_M(u)^{-1}$ exists in $\mathbb{C}[[u]]$, having a nonzero
constant term.
Next we inflate the arcs of the $\mathcal{F}_M$-structure into
stacks, obtaining
\begin{equation}
{\bf W}_M(u)^{-1}= 1-\sum_{g \leq \gamma}\sum_{1<m}
{{\bf i}_g(m)\,\bf W}_M(u)^{-1}\left(\frac{u\,
{\bf W}_M^2(u)}{1-u\,({\bf W}_M^2(u))}\right)^m.
\end{equation}

We next derive the functional equation for $\mathbf{H}_{\gamma}(u)$
by incorporating noncrossing arcs. Since the maximal arcs composed
of noncrossing arcs are exactly rainbows, the generating function
of $\mathcal{H}_{\gamma}$-diagrams nested in a rainbow is given by
$u\, {\bf H}_{\gamma}(u)$. As in Claim $3$ we conclude
$$
{\bf H}_{\gamma}(u)^{-1}=
1-\sum_{g \leq \gamma}\left(u\,
{\bf H}_{\gamma}(u)+{\bf H}_{\gamma}(u)^{-1}\,\sum_{m>1}
{\bf i}_g(m)\,
\vartheta(u)^m \right),
$$
where
$$
\vartheta(u)=\frac{u\,{\bf H}_{\gamma}^{2}(u)}
 {1-u\,{\bf H}_{\gamma}^{2}(u)}.
$$
Setting $w_u(X)=1-u\,X^2$, eq.~(\ref{E:canonical-stru}) gives rise to
the polynomial
\begin{equation}\label{E:erni}
P_\gamma(u,X)= w_u(X)^{\kappa_\gamma} (-1 + X - u\,X^2)  -
\sum_{g \leq \gamma}\,w_u(X)^{\kappa_\gamma}
 \,{\bf I}_g\left(\frac{u\,X^{2}}
 {w_u(X)}\right),
\end{equation}
where $\kappa_\gamma=6\gamma-2$, $\text{\rm deg}(P_\gamma(u,X))
=  (2+2\kappa_\gamma)$,
$\left[X^{2+2\kappa_\gamma}\right]P_\gamma(u,X)  =   -u^{1+\kappa_\gamma}$ and
$P_\gamma(u,{\bf H}_\gamma(u)) =0$, whence {\bf (a)}.

It remains to prove {\bf (b)}. Since $M$ is the finite set of
irreducible shadows of genus $g\le\gamma$ and any such shadow
has $2g\le m\le \kappa_\gamma$ arcs \cite{fenix2bb}, any $M$-shadow has
$\le \kappa_\gamma$ arcs. Setting
$v(u)=1-u\,{\bf H}_{\gamma}^{2}(u)$, eq.~(\ref{E:canonical-stru})
implies
\begin{eqnarray*}
v(u)^{\kappa_\gamma} & = &
{\bf H}_{\gamma}(u)v(u)^{\kappa_\gamma}
- u\,{\bf H}^2_{\gamma}(u)\,v(u)^{\kappa_\gamma}  -
\sum_{g \leq \gamma}\,v(u)^{\kappa_\gamma}
 \,{\bf I}_g\left(\frac{u\,{\bf H}_{\gamma}^{2}(u)}
 {v(u)}\right)
\end{eqnarray*}
and consequently
\begin{equation}\label{E:canonical-stru5}
 \begin{split}
{\bf H}_{\gamma}(u)
&= -{\bf H}_{\gamma}(u)\sum_{i=1}^{\kappa_\gamma}\,
{\kappa_\gamma \choose i}1^{\kappa_\gamma-i}
{(v(u)-1)^i}+u\,{\bf H}^2_{\gamma}(u)\,v(u)^{\kappa_\gamma}+v(u)^{\kappa_\gamma}\\
&+ \sum_{g \leq \gamma}\,v(u)^{\kappa_\gamma}
{\bf I}_g\left(\frac{u\,{\bf H}_{\gamma}^{2}(u)}
  {v(u)}\right).
\end{split}
\end{equation}
All coefficients of ${\bf H}_{\gamma}(u)$ in the RHS of
eq.~(\ref{E:canonical-stru5}), are polynomials in $u$ of
degree $\geq 1$, whence any $[z^n]{\bf H}_{\gamma}(u)$ for
$n\ge (\kappa_\gamma+1)$ can be recursively computed.
Accordingly, eq.~(\ref{E:canonical-stru5}) determines
${\bf H}_{\gamma}(u)$ uniquely.
\end{proof}


\newpage
\section{Irreducible shadows}\label{S:irr}
The bivariate generating function of irreducible shadows of genus $g$
with $m$ arcs is denoted by
\[
{\bf I}(z,t)=\sum_{g\geq 1}{\bf I}_g(z)\,t^g=\sum_{g\geq 1}\sum_{m=2g}^{6g-2}\,{\bf i}_g(m) \, z^m t^g .
\]
Let ${\bf c}_g(m)$ denote the number of matchings of genus $g$
with $m$ arcs. We have the generating function of matchings of genus
$g$
$$
{\bf C}_g(z)=\sum_{m\geq 2g}\,{\bf c}_g(m) z^m.
$$
The bivariate generating function of matchings of genus $g$
with $m$ arcs is denoted by
\[
{\bf C}(z,t)=\sum_{g\geq 0}{\bf C}_g(z)\,t^g=\sum_{g\geq 0}\sum_{m\geq 2g}\,{\bf c}_g(m)\, z^m t^g .
\]

\begin{theorem}\label{T:CIrelation}
The generating functions  ${\bf C}(z,t)$ and ${\bf I}(z,t)$ satisfy
\begin{equation*}
{\bf C}(z,t)^{-1} = 1- \left(z\,{\bf C}(z,t)
 + {\bf C}(z,t)^{-1} {\bf I}\left(\frac{z\,{\bf C}(z,t)^2}{1-z\,{\bf C}(z,t)^2},t\right)\right),
\end{equation*}
equivalently,
\begin{equation}\label{E:CIrelation}
{\bf C}(z,t)-z\,{\bf C}(z,t)^2- {\bf I}\left(\frac{z\,{\bf C}(z,t)^2}{1-z\,{\bf C}(z,t)^2},t\right)=1.
\end{equation}
\end{theorem}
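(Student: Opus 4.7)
The plan is to mirror the proof of Theorem~\ref{T:H(u)} almost verbatim, but without the genus truncation and with genus tracked by a formal parameter~$t$. The key structural input is that genus is additive under the two composition operations used in the shadow decomposition: if a diagram $\mathbb{G}$ decomposes (via the iterative procedure described above Fig.~\ref{F:decomposition}) into irreducible shadows $\sigma_1,\dots,\sigma_t$ of genera $g_1,\dots,g_t$, then $g(\mathbb{G})=\sum_{i}g_i$. This follows from the fatgraph/fattenings viewpoint, since concatenating along the backbone and nesting inside a $\sigma$-interval both realize connected sums of the associated bordered surfaces at a disc, which add Euler defect additively.

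First, I would replay the argument of Claim~1--Claim~3 of Theorem~\ref{T:H(u)} verbatim, replacing $u$ by $z$, ${\bf H}_\gamma$ by ${\bf C}(\cdot,t)$, and collecting the additional weight $t^g$ on each occurrence of an irreducible shadow of genus $g$. Concretely, fix an irreducible shadow $\sigma$ of genus $g$ with $m$ arcs and define $\mathcal{V}_\sigma,\mathcal{L}_\sigma,\mathcal{F}_\sigma,\mathcal{W}_\sigma$ exactly as in the proof of Theorem~\ref{T:H(u)}, but now weighting each copy of $\sigma$ in the nesting/concatenation by $t^g$. Since genus is additive under both operations, the nested generating function for $\mathcal{W}_\sigma$ is obtained from eq.~(\ref{E:lstructure}) simply by attaching a single global factor $t^g$ per use of $\sigma$, so every intermediate identity goes through unchanged.

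Second, I would perform the analogue of Claim~3 by summing over the set $M$ of \emph{all} irreducible shadows (rather than those of genus $\le \gamma$). Using the multinomial decomposition in eq.~(\ref{E:multinomial}), the inner generating polynomial $\sum_{g\le\gamma}\sum_{m>1}{\bf i}_g(m)X^m$ gets replaced by ${\bf I}(X,t)=\sum_{g\ge 1}\sum_{m=2g}^{6g-2}{\bf i}_g(m)X^m t^g$, which is now a well-defined formal power series in $(X,t)$ rather than a finite sum, but the combinatorial bookkeeping is identical. This yields
\begin{equation*}
{\bf W}_M(z,t)^{-1}=1-{\bf W}_M(z,t)^{-1}\,{\bf I}\!\left(\frac{z\,{\bf W}_M(z,t)^2}{1-z\,{\bf W}_M(z,t)^2},\,t\right).
\end{equation*}
Finally, I would add back noncrossing rainbows exactly as at the end of the proof of Theorem~\ref{T:H(u)}: a $\mathcal{C}$-diagram is obtained by concatenation and nesting of either (i) a single arc containing a sub-$\mathcal{C}$-diagram, contributing $z\,{\bf C}(z,t)$, or (ii) an irreducible shadow of some genus $g\ge 1$ inflated as in $\mathcal{W}_M$. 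This produces eq.~(\ref{E:CIrelation}).

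The only non-cosmetic point I would have to justify carefully is the well-definedness of the substitution ${\bf I}(z\,{\bf C}^2/(1-z\,{\bf C}^2),t)$ as a formal power series: the argument $z\,{\bf C}(z,t)^2/(1-z\,{\bf C}(z,t)^2)$ has zero constant term in $z$, so the substitution is admissible coefficient-wise in $\mathbb{Z}[t][[z]]$, and the double sum over $g$ and $m$ converges formally because, for each fixed power $z^n$, only finitely many irreducible shadows (those with at most $n$ arcs, hence of bounded genus) contribute. This is the one place where one must replace the finite sum appearing in Theorem~\ref{T:H(u)}(a) by an infinite one, so it is the step requiring the most attention; once settled, the combinatorial identity of Theorem~\ref{T:H(u)} transports directly to the bivariate setting and yields eq.~(\ref{E:CIrelation}).
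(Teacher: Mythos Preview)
Your proposal is correct and follows essentially the same approach as the paper: both arguments decompose a matching into a sequence of blocks whose maximal component is either a single rainbow (contributing $z\,{\bf C}(z,t)$) or an irreducible shadow $\sigma$ of genus $g$ with $m$ arcs, then inflate $\sigma$-arcs into stacks with induced arcs in $P$-intervals and insert sub-matchings in the $2m-1$ $\sigma$-intervals, invoking additivity of genus to attach the weight $t^g$. The paper organizes this as a direct three-step construction of the block generating function ${\bf T}_\sigma(z,t)$ rather than by quoting Claims~1--3 of Theorem~\ref{T:H(u)}, but the combinatorial content is identical, and your extra care about formal well-definedness of the substitution is a welcome addition that the paper leaves implicit.
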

\begin{proof}
We distinguish the classes of blocks into two categories characterized by
the unique component containing all maximal arcs (maximal component). Namely,\\
$\bullet$ blocks whose maximal component contains only one arc, \\
$\bullet$ blocks whose maximal component is an (nonempty) irreducible matching.\\
In the first case, the removal of the maximal component (one arc) generates
again an arbitrary matching, which translates into the term
$$
z\,{\bf C}(z,t).
$$
Let ${\bf T}(z,t)$ denote the (genus filtered) generating function of blocks of
the second type. The decomposition of matchings into a sequence of
blocks implies
\[
{\bf C}(z,t)^{-1}
 = 1- \left( z\,{\bf C}(z,t)  + {\bf T}(z,t)\right).
\]
Let $\sigma$ be a fixed irreducible shadow of genus $g$ having $n$ arcs. Let
${\bf T}_\sigma(z,t)$ be the generating function of blocks, having $\sigma$ as
the shadow of its unique maximal component. Then we have
\[
{\bf T}(z,t)= \sum_{\sigma \in \mathcal{I}}{\bf T}_\sigma(z,t),
\]
where $\mathcal{I}$ denotes the set of irreducible shadows.

We shall construct $\mathcal{T}_\sigma$ in three steps using arcs, $\mathcal{R}$,
sequences of arcs, $\mathcal{K}$, induced arcs, $\mathcal{N}$, sequence of
induced arcs, $\mathcal{M}$, and arbitrary matchings, $\mathcal{C}$.

{\bf Step I:} We inflate each arc in $\sigma$ into a sequence of induced arcs,
see Fig.~\ref{F:H1}. An induced arc, i.e.~an arc together with at least one
nontrivial matching in either one or in both ${P}$-intervals
\begin{equation*}
\mathcal{N} = \mathcal{R}\times
\left( (\mathcal{C}-1)+(\mathcal{C}-1)+
(\mathcal{C}-1)^2\right)
=\mathcal{R}\times \left(\mathcal{C}^2-1\right).
\end{equation*}
\begin{figure}
\begin{center}
\includegraphics[width=0.8\textwidth]{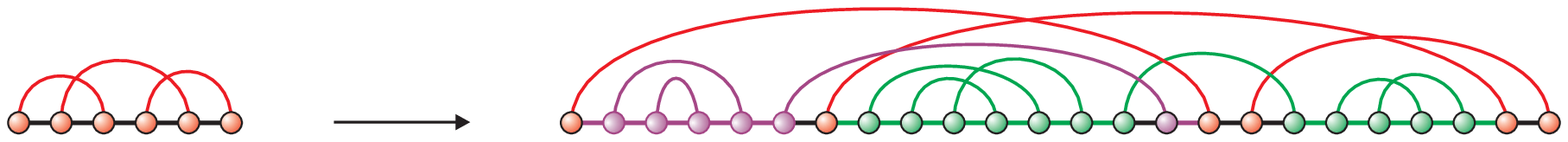}
\end{center}
\caption{\small {\bf Step I:} inflation of each arc in $\sigma$ into a sequence of induced arcs.
}\label{F:H1}
\end{figure}
Clearly, we have for a single induced arc
$\mathbf{N}(z,t)=z \left({\bf C}(z,t)^2-1 \right)$, guaranteed
by the additivity of genus,
and for a sequence of induced arcs, $\mathcal{M}=
\textsc{Seq}(\mathcal{N})$, where
\begin{eqnarray*}
{\bf M}(z,t) & = &  \frac{1}{1-z \left({\bf C}(z,t)^2-1 \right)}.
\end{eqnarray*}
Inflating each arc into a sequence of induced arcs, $R^n \times \mathcal{M}^n$,
gives the corresponding generating function
\[
z^n {\bf M}(z,t)^n=\left(\frac{z}{1-z \left({\bf C}(z,t)^2-1 \right)}\right)^n,
\]
since the genus is additive.

{\bf Step II:} We inflate each arc in the component with shadow $\sigma$ into
stacks, see Fig.~\ref{F:H2}. The corresponding generating function is
\[
\left(\frac{\frac{z}{1-z}}{1-\frac{z}{1-z}
\left({\bf C}(z,t)^2-1 \right)}\right)^n=
\left(\frac{z}{1-z {\bf C}(z,t)^2}\right)^n
\]
\begin{figure}
\begin{center}
\includegraphics[width=0.9\textwidth]{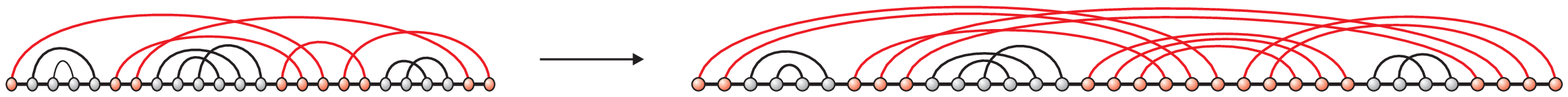}
\end{center}
\caption{\small  {\bf Step II:} inflation of each arc in the component with
shadow $\sigma$ into stacks.
}\label{F:H2}
\end{figure}
{\bf Step III:} We insert additional matchings at exactly
$(2n-1)$ $\sigma$-intervals, see Fig.~\ref{F:H3}. Accordingly, the
generating function is ${\bf C}(z,t)^{2n-1}$.
\begin{figure}
\begin{center}
\includegraphics[width=1\textwidth]{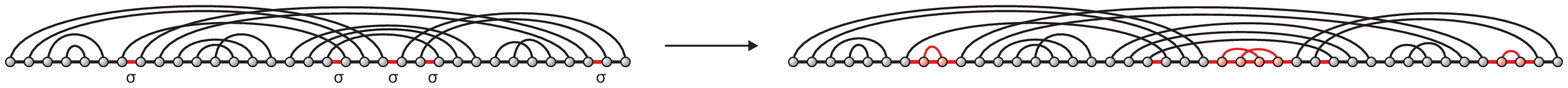}
\end{center}
\caption{\small  {\bf Step III:} insertion of additional matchings at exactly
$(2n-1)$ $\sigma$-intervals.
}\label{F:H3}
\end{figure}

Combining these three steps and utilizing additivity of the genus, we arrive at
\begin{eqnarray*}
{\bf T}_\sigma(z,t)&=& t^g \left(\frac{z}{1-z {\bf C}(z,t)^2}\right)^n
{\bf C}(z,t)^{2n-1}\\
&=& t^g {\bf C}(z,t)^{-1}\,
      \left(\frac{z {\bf C}(z,t)^2}{1-z {\bf C}(z,t)^2}\right)^n.
\end{eqnarray*}
Therefore
\begin{eqnarray*}
{\bf T}(z,t)&=& \sum_{\sigma \in \mathcal{I}}{\bf T}_\sigma(z,t)\\
&=& \sum_{g,n} {\bf i}_g(n) t^g \,{\bf C}(z,t)^{-1}\,
\left(\frac{z {\bf C}(z,t)^2}{1-z {\bf C}(z,t)^2}\right)^n.
\end{eqnarray*}
We derive
\[
{\bf T}(z,t)={\bf C}(z,t)^{-1} {\bf I}
\left(\frac{z\,{\bf C}(z,t)^2}{1-z\,{\bf C}(z,t)^2},t\right),
\]
completing the proof of eq.~(\ref{E:CIrelation}).

\end{proof}

Now we can derive a recursion for ${\bf I}_g(z)$ from Theorem~\ref{T:CIrelation}.
\begin{corollary}\label{C:irr}
For $g\ge 1$, ${\bf I}_g(z)$ satisfies the following recursion
\begin{equation*}
\begin{split}
& {\bf I}_g(z)={\bf C}_g(\theta(z))-\theta(z)\, \sum_{i=0}^g{\bf C}_i(\theta(z)){\bf C}_{g-i}(\theta(z)) \\
& - \sum_{j=1}^{g-1} [t^{g-j}]{\bf I}_j\left(\frac{\theta(z)\,(\sum_{k=0}^{g-j}{\bf C}_k(\theta(z)) t^k)^2}{1-\theta(z)\,(\sum_{k=0}^{g-j}{\bf C}_k(\theta(z)) t^k)^2}\right),
\end{split}
\end{equation*}
where $\theta(z)=\frac{z (z+1)}{(2z+1)^2}$.
\end{corollary}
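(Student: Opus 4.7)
The plan is to extract the coefficient of $t^g$ from the functional equation of Theorem~\ref{T:CIrelation}, namely
$$
{\bf C}(z,t) - z{\bf C}(z,t)^2 - {\bf I}(u(z,t), t) = 1, \qquad u(z,t) := \frac{z{\bf C}(z,t)^2}{1 - z{\bf C}(z,t)^2},
$$
after first applying the substitution $z \mapsto \theta(z)$. The point of the substitution is that it forces the argument $u(\theta(z), t)$ of ${\bf I}$ to reduce to $z$ when $t=0$; with this arranged, extracting $[t^g]$ from the expansion ${\bf I}(u(\theta(z),t), t) = \sum_{j \geq 1} {\bf I}_j(u(\theta(z),t))\,t^j$ singles out ${\bf I}_g(z)$ from the $j=g$ piece, while the $j<g$ pieces will supply the recursive correction terms.

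The first step is to verify that $\theta(z) = \frac{z(1+z)}{(1+2z)^2}$ is indeed the functional inverse of $u_0(z) := u(z, 0)$. Setting $t = 0$ yields ${\bf C}(z, 0) = {\bf C}_0(z)$, which obeys the Catalan identity ${\bf C}_0(z) = 1 + z{\bf C}_0(z)^2$. Using this to rewrite $u_0(z) = \frac{{\bf C}_0(z) - 1}{2 - {\bf C}_0(z)}$, the equation $u_0(\theta) = z$ collapses to ${\bf C}_0(\theta) = \frac{1+2z}{1+z}$, and then $\theta = ({\bf C}_0(\theta)-1)/{\bf C}_0(\theta)^2$ gives precisely the stated $\theta(z)$.

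With $\theta$ in hand, I would substitute $z \mapsto \theta(z)$ in the functional equation and take $[t^g]$ for $g \geq 1$. The first two terms immediately contribute ${\bf C}_g(\theta(z))$ and $-\theta(z) \sum_{i=0}^{g} {\bf C}_i(\theta(z)){\bf C}_{g-i}(\theta(z))$. For the ${\bf I}$-term, I would split
$$
[t^g]\,{\bf I}(u(\theta(z),t), t) \;=\; \sum_{j=1}^{g} [t^{g-j}]\,{\bf I}_j(u(\theta(z), t)).
$$
The $j = g$ summand equals ${\bf I}_g(u(\theta(z),0)) = {\bf I}_g(z)$, producing the unknown on the left. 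For each $j < g$, I would invoke the elementary fact that $[t^{g-j}]$ of any formal series in $t$ depends only on its coefficients up to order $g-j$, so ${\bf C}(\theta(z),t)$ may be replaced inside $u$ by its truncation $\sum_{k=0}^{g-j} {\bf C}_k(\theta(z))\, t^k$; this yields exactly the displayed right-hand side after rearrangement.

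The main obstacle I anticipate is the clean verification of the inversion identity $u_0(\theta(z)) = z$ and confirming that every composition involved is well-defined in the relevant power-series ring. Since $\theta(z) = z + O(z^2)$, ${\bf C}(z,t)$ has constant term $1$ in both variables, and $1 - \theta(z){\bf C}(\theta(z),t)^2$ therefore has nonzero constant term in $t$, no invertibility issues arise, and the remainder of the derivation is coefficient bookkeeping.
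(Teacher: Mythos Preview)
Your proposal is correct and follows essentially the same approach as the paper: both extract the $t^g$-coefficient from the functional equation of Theorem~\ref{T:CIrelation}, isolate the $j=g$ summand as ${\bf I}_g$ evaluated at $u_0$, use the truncation observation for the $j<g$ summands, and invert $u_0$ via $\theta$. The only cosmetic difference is that the paper first extracts $[t^g]$ and then substitutes $z\mapsto\theta(z)$, whereas you substitute first and then extract; your explicit verification of the inversion identity is a welcome addition the paper omits.
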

\begin{proof}
We compute the coefficient of $t^g$ on both sides of eq.~(\ref{E:CIrelation})
\begin{equation*}
[t^g]{\bf C}(t,z)-z\,[t^g] {\bf C}(t,z)^2- [t^g]{\bf I}\left(\frac{z\,{\bf C}(t,z)^2}{1-z\,{\bf C}(t,z)^2},t\right)=0
\end{equation*}
\begin{equation*}
{\bf C}_g(z)-z\, \sum_{i=0}^g{\bf C}_i(z){\bf C}_{g-i}(z)- \sum_{j=1}^{g} [t^{g-j}]{\bf I}_j\left(\frac{z\,{\bf C}(t,z)^2}{1-z\,{\bf C}(t,z)^2}\right)=0
\end{equation*}
\begin{equation*}
\begin{split}
& {\bf C}_g(z)-z\, \sum_{i=0}^g{\bf C}_i(z){\bf C}_{g-i}(z)- \sum_{j=1}^{g-1} [t^{g-j}]{\bf I}_j\left(\frac{z\,{\bf C}(t,z)^2}{1-z\,{\bf C}(t,z)^2}\right) \\
& =[t^{0}]{\bf I}_g\left(\frac{z\,{\bf C}(t,z)^2}{1-z\,{\bf C}(t,z)^2}\right)
\end{split}
\end{equation*}
Note that
\begin{equation*}
[t^{g-j}]{\bf I}_j\left(\frac{z\,{\bf C}(t,z)^2}{1-z\,{\bf C}(t,z)^2}\right)=[t^{g-j}]{\bf I}_j\left(\frac{z\,(\sum_{k=0}^{g-j}{\bf C}_k(z) t^k)^2}{1-z\,(\sum_{k=0}^{g-j}{\bf C}_k(z) t^k)^2}\right).
\end{equation*}
Hence,
\begin{equation*}
\begin{split}
& {\bf C}_g(z)-z\, \sum_{i=0}^g{\bf C}_i(z){\bf C}_{g-i}(z)- \sum_{j=1}^{g-1} [t^{g-j}]{\bf I}_j\left(\frac{z\,(\sum_{k=0}^{g-j}{\bf C}_k(z) t^k)^2}{1-z\,(\sum_{k=0}^{g-j}{\bf C}_k(z) t^k)^2}\right) \\
& ={\bf I}_g\left(\frac{z\,{\bf C}_0(z)^2}{1-z\,{\bf C}_0(z)^2}\right) 
\end{split}
\end{equation*}
Setting $y=\frac{z\,{\bf C}_0(z)^2}{1-z\,{\bf C}_0(z)^2}$, we have $z=\theta(y)=\frac{y (y+1)}{(2y+1)^2}$. Then we derive
\begin{equation*}
\begin{split}
&{\bf I}_g(y)={\bf C}_g(\theta(y))-\theta(y)\, \sum_{i=0}^g{\bf C}_i(\theta(y)){\bf C}_{g-i}(\theta(y)) \\
&- \sum_{j=1}^{g-1} [t^{g-j}]{\bf I}_j\left(\frac{\theta(y)\,(\sum_{k=0}^{g-j}{\bf C}_k(\theta(y)) t^k)^2}{1-\theta(y)\,(\sum_{k=0}^{g-j}{\bf C}_k(\theta(y)) t^k)^2}\right)
\end{split}
\end{equation*}
completing the proof.
\end{proof}

A seminal result due to \cite{Harer:86}, computes a recursion and generating
function for the number ${\bf c}_g(m)$ as follows :

\begin{lemma}\cite{Harer:86}\label{L:recursion}
The ${\bf c}_g(m)$ satisfy the recursion
\begin{equation}\label{E:recursion}
(m+1)\, \mathbf{c}_g(m)  =  2(2m-1)\,\mathbf{c}_g(m-1)+
                          (2m-1)(m-1)(2m-3)\,\mathbf{c}_{g-1}(m-2),
\end{equation}
where $\mathbf{c}_g(m)=0$ for $2g>m$.
\end{lemma}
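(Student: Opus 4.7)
The identity is the classical Harer--Zagier three-term recursion, so my plan is to sketch one of the established routes rather than rediscover it. I would interpret $\mathbf{c}_g(m)$ in its one-vertex map (or equivalently, chord-diagram) incarnation: matchings on $[2m]$ whose associated fatgraph yields a closed orientable surface of genus $g$. The recursion is then obtained by a surgery centered on the arc incident to the distinguished endpoint $2m$.

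Concretely, given $M\in\mathcal{C}_g(m)$, let $\alpha=(i,2m)$ be the unique arc of $M$ meeting $2m$. The plan is to remove $\alpha$ and reglue the backbone, and to classify the effect on the resulting matching on $2m-2$ vertices. Three subcases arise:
\textbf{(i)} $i=2m-1$, in which case $\alpha$ is a trailing isolated arc and the deletion lands in $\mathcal{C}_g(m-1)$;
\textbf{(ii)} $i<2m-1$ and the deletion preserves genus (the boundary component passing through $\alpha$ splits), landing again in $\mathcal{C}_g(m-1)$ with a marked position for reinsertion of $i$;
\textbf{(iii)} $i<2m-1$ and the deletion drops the genus by one (the two boundary components through $\alpha$ merge), landing in $\mathcal{C}_{g-1}(m-2)$ after also pairing off the former mate of $2m-1$.
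Counting each case by the number of ways to perform the inverse surgery (insertion of $\alpha$ at a marked interval, together with the choice of partner) and weighting by the $m+1$ ways to single out the arc meeting $2m$ yields, after straightforward bookkeeping, the identity (\ref{E:recursion}).

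The main obstacle is case \textbf{(iii)}: one must verify that the combinatorial surgery which reverses a genus-increasing insertion produces each element of $\mathcal{C}_{g-1}(m-2)$ with exactly the multiplicity $(2m-1)(m-1)(2m-3)$. The three factors correspond, respectively, to choosing a half-edge on the merged boundary, choosing the partner arc, and choosing how to split the cyclic boundary of the new genus-lifted face; tracking these choices so that every $(m-2)$-arc diagram of genus $g-1$ is hit the correct number of times is the entire combinatorial content of the proof and is what forces the precise cubic-in-$m$ coefficient on the right-hand side.

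An alternative route, and the one originally pursued by Harer and Zagier, is analytic: realize $\sum_g \mathbf{c}_g(m)\,N^{m+1-2g}$ as the GUE moment $\mathbb{E}[\operatorname{tr} H^{2m}]$ for an $N\times N$ Hermitian random matrix, evaluate it in closed form via the orthogonality of Hermite polynomials, and extract the recursion from the classical three-term recurrence for those polynomials. This exchanges the combinatorial case analysis for a verification that the resulting polynomial in $N$ recovers the genus expansion, which follows from Wick's theorem together with the Euler-characteristic interpretation of the $N$-exponents. Either way, the statement of Lemma~\ref{L:recursion} is obtained.
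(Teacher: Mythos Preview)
The paper does not prove this lemma; it is quoted verbatim from \cite{Harer:86} and used as a black box (the only subsequent use is the equivalent ODE~(\ref{E:ODE}) for $\mathbf{C}_g(z)$). So there is no in-paper argument to compare your proposal against---anything you supply is already more than the paper offers.

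On the substance of your two routes: the analytic one is exactly the original Harer--Zagier derivation (GUE moments, Hermite orthogonality, three-term recurrence), and as a high-level outline it is correct. Your bijective sketch, however, has a genuine gap. The sentence ``weighting by the $m+1$ ways to single out the arc meeting $2m$'' cannot be right: there is precisely one arc incident to the vertex $2m$, so no such choice exists. In the known combinatorial proofs of this recursion (Lass, Goulden--Nica, Chapuy's trisection bijection) the factor $m+1$ does not arise from choosing an arc at all; it reflects either the number $n+1-2g$ of vertices in the associated one-face map (summed with the genus-changing term to produce $m+1$) or, equivalently, the action of a differential operator on the generating series. Your case~(iii) is also underspecified: deleting a single arc changes the arc count by one, not two, so ``landing in $\mathcal{C}_{g-1}(m-2)$ after also pairing off the former mate of $2m-1$'' needs an explicit second deletion and a careful check that the inverse map has the stated multiplicity. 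If you want a self-contained bijective argument you would need to rebuild these two points; as written, the surgery does not assemble into~(\ref{E:recursion}). The safest course, matching the paper, is simply to cite \cite{Harer:86} (or one of the later bijective references) and move on.
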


The recursion eq.~(\ref{E:recursion}) is equivalent to the ODE
\begin{eqnarray}\label{E:ODE}
z(1-4z)\frac{d}{dz}\mathbf{C}_g(z) +(1-2z)\mathbf{C}_g(z) & = &
\Phi_{g-1}(z),
\end{eqnarray}
where
\begin{eqnarray*}
\begin{split}
& \Phi_{g-1}(z) = \\
& z^2\left(4z^3\frac{d^3}{dz^3}\mathbf{C}_{g-1}(z) +
24z^2 \frac{d^2}{dz^2}\mathbf{C}_{g-1}(z) + 27z\frac{d}{dz}
\mathbf{C}_{g-1}(z)+3\mathbf{C}_{g-1}(z)\right)
\end{split}
\end{eqnarray*}
with initial condition $\mathbf{C}_g(0)=0$
since $r=n+1-2g$ has no positive solution $r>0$ for $n<2g$.
Therefore we can recursively compute $\mathbf{C}_g(z)$ by solving eq.~(\ref{E:ODE}) via \textsf{Maple}.
\begin{theorem}\label{E:GF} \cite{topmatch}
For any $g\ge 1$ the generating function ${\bf C}_g(z)$ is given by
\begin{eqnarray}\label{E:it}
\mathbf{C}_g(z) = \, Q_g(z)\frac{\sqrt {1-4\,z}}{(1-4z)^{3g}},
\end{eqnarray}
where $Q_g(z)$ is a polynomial with integral coefficients of degree at
most $(3g-1)$, $Q_g(1/4)\neq 0$, $[z^{2g}]Q_g(z)\neq 0$ and
$[z^h]Q_g(z)=0$ for $0\leq h\leq 2g-1$.
\end{theorem}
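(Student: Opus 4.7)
The plan is induction on $g$ via the ansatz $\mathbf{C}_g(z) = Q_g(z)(1-4z)^{1/2-3g}$, substituted into the ODE (\ref{E:ODE}) to reduce everything to a finite-dimensional linear problem for $Q_g$. For the base case $g=1$, the Catalan generating function $\mathbf{C}_0(z) = (1-\sqrt{1-4z})/(2z)$ gives an explicit $\Phi_0$; solving (\ref{E:ODE}) by the integrating factor $\mu(z) = z/\sqrt{1-4z}$ produces $\mathbf{C}_1$ in the claimed shape, and the four properties of $Q_1$ are verified directly.

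For the inductive step, assume the statement holds for $\mathbf{C}_{g-1}$. A Leibniz computation with $\mathbf{C}_{g-1}^{(k)}(z) = P_k(z)(1-4z)^{\alpha-k}$, $\alpha = 7/2-3g$, and the recursion $P_{k+1} = (1-4z)P_k' - 4(\alpha-k)P_k$ shows $\deg P_k \leq 3g-4$ and that
$$
\Phi_{g-1}(z) = R(z)(1-4z)^{1/2-3g}
$$
for some polynomial $R$; moreover $z^{2g}\mid R$, because $\mathbf{C}_{g-1} = O(z^{2(g-1)})$ combines with the explicit $z^2$ in $\Phi_{g-1}$. Cancelling $(1-4z)^{1/2-3g}$ throughout (\ref{E:ODE}) reduces the ODE to the polynomial identity
$$
L_g(Q_g)(z) := z(1-4z)\,Q_g'(z) + \bigl((12g-4)z+1\bigr)Q_g(z) = R(z).
$$
Since $L_g(z^j) = (j+1)z^j + 4(3g-1-j)z^{j+1}$, the operator $L_g$ preserves the subspace of polynomials of degree $\leq 3g-1$, where it is represented in the monomial basis by a lower bidiagonal matrix with nonzero diagonal $(1,2,\ldots,3g)$. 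Forward substitution through the induced recursion $r_k = (k+1)q_k + 4(3g-k)q_{k-1}$ then produces the unique polynomial solution $Q_g$ of degree $\leq 3g-1$ and simultaneously transfers $z^{2g}\mid R$ to $z^{2g}\mid Q_g$, giving the vanishing $[z^h]Q_g = 0$ for $h \leq 2g-1$.

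The two nonvanishing conditions are local. Evaluating $L_g(Q_g) = R$ at $z = 1/4$ yields $3g\,Q_g(1/4) = R(1/4)$, and the leading singular coefficient of $\Phi_{g-1}$ at $z = 1/4$ (coming from the term $4z^5\mathbf{C}_{g-1}'''$) computes to a nonzero multiple of $Q_{g-1}(1/4)$, hence is nonzero by induction. Similarly $(2g+1)[z^{2g}]Q_g = [z^{2g}]R = \mathbf{c}_{g-1}(2g-2)\cdot\bigl[4(2g-2)(2g-3)(2g-4) + 24(2g-2)(2g-3) + 27(2g-2) + 3\bigr]$, a strictly positive product for $g \geq 2$.

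The step I expect to be the main obstacle is the sharp degree bound $\deg R \leq 3g-1$. A naive estimate on the four summands of $\Phi_{g-1}$ only delivers $\deg R \leq 3g+1$, so two algebraic cancellations --- at powers $z^{3g+1}$ and $z^{3g}$ --- must be checked; both are direct consequences of the specific numerical constants $(4, 24, 27, 3)$ appearing in $\Phi_{g-1}$, which themselves encode the Harer--Zagier recurrence (\ref{E:recursion}). Concretely, one uses the top-coefficient recursion $[z^{3g-4}]P_{k+1} = 2(2k+1)[z^{3g-4}]P_k$ (together with its analogue for $[z^{3g-5}]P_k$) to verify $4\cdot 120 - 96\cdot 12 + 432\cdot 2 - 192 = 0$ at $z^{3g+1}$ and the corresponding combination at $z^{3g}$. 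Once these two identities are in hand, the induction closes.
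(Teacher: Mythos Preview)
The paper does not prove this theorem: it is stated with a citation to \cite{topmatch}, and the text that follows merely lists the first five polynomials $Q_g$. There is thus no proof in the present paper to compare yours against.

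That said, your inductive argument through the ODE~(\ref{E:ODE}) is the natural approach and is essentially sound. The reduction to $L_g(Q_g)=R$ is correct, your computation $L_g(z^j)=(j+1)z^j+4(3g-1-j)z^{j+1}$ indeed shows that $L_g$ is lower bidiagonal with nonzero diagonal on the span of $1,z,\dots,z^{3g-1}$, and the two nonvanishing checks are handled correctly (for the second one, the bracket you write down equals $(4g-1)(2g-1)(4g-3)>0$). Since the only homogeneous solution of~(\ref{E:ODE}) is a multiple of $\sqrt{1-4z}/z$, which is not a power series, the function $Q_g(z)(1-4z)^{1/2-3g}$ you produce is automatically $\mathbf{C}_g$.

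Two points deserve to be made explicit. First, \emph{integrality} of $Q_g$ does not follow from the forward substitution $q_k=(r_k-4(3g-k)q_{k-1})/(k+1)$, which a priori only gives rational numbers. The clean fix is extrinsic: once you know $Q_g$ is a polynomial, write $Q_g=\mathbf{C}_g\cdot(1-4z)^{3g-1}\sqrt{1-4z}$; since $\sqrt{1-4z}=1-2\sum_{n\ge 1}C_{n-1}z^{n}$ has integer coefficients and $\mathbf{C}_g\in\mathbb{Z}[[z]]$, so does their product. Second, the cancellation at $z^{3g}$ that you defer is not literally ``the corresponding combination'': the recursion for the subleading coefficients $b_k=[z^{3g-5}]P_k$ picks up an inhomogeneous term $(3g-4)a_k$ from $[z^{3g-5}]P_k'$, so the identity to verify mixes the $a_k$ and the $b_k$ and is genuinely a second (longer) computation rather than a repetition of the one at $z^{3g+1}$. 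It does go through, but you should not present it as a formality.
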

The recursion eq.~(\ref{E:ODE}) permits the calculation of the polynomials  $Q_g(z)$,
the first five of which are given as follows \cite{topmatch}
\begin{eqnarray*}
Q_1(z) &=& z^2,\\
Q_2(z) &=& 21z^4\, \left( z+1 \right)\\
Q_3(z) &=&  11z^6\, \left( 158\,{z}^{2}+558\,z+135 \right),\\
Q_4(z) &=&143z^8\left( 2339\,{z}^{3}+18378\,{z}^{2}+13689\,z+1575 \right),\\
Q_5(z) &=&  88179z^{10}\, \left( 1354\,{z}^{4}+18908\,
{z}^{3}+28764\,{z}^{2}+9660\,z+675 \right).
\end{eqnarray*}

Applying Corollary~\ref{C:irr} together with the generating function ${\bf C}_g(z)$,
we recursively compute ${\bf I}_g(z)$.

For example, for $g=1$,
\begin{eqnarray*}
{\bf I}_1(z) &=&{\bf C}_1(\theta(z))-2\theta(z)\,{\bf C}_0(\theta(z)){\bf C}_{1}(\theta(z))\\
             &=& {z}^{2}\left( 1+z \right) ^{2}.
\end{eqnarray*}

For $1\leq g \leq 8$, we list ${\bf I}_g(z)$ as follows
\begin{eqnarray*}
{\bf I}_1(z)&=&{z}^{2}\left( 1+z \right) ^{2},\\
{\bf I}_2(z)&=&{z}^{4} \left( 1+z \right) ^{4} \left( 17+92\,z+96\,{z}^{2} \right)\\
{\bf I}_3(z)&=&{z}^{6} \left( 1+z \right) ^{6} \left( 1259+15928\,z+61850\,{z}^{2}+
92736\,{z}^{3}+47040\,{z}^{4} \right)\\
{\bf I}_4(z)&=&{z}^{8} \left( 1+z \right) ^{8} \left( 200589+4245684\,z+31264164\,{z}
^{2}+107622740\,{z}^{3}\right.\\
&&\left.+188262816\,{z}^{4}+161967360\,{z}^{5}+54333440\,{z}^{6} \right)\\
{\bf I}_5(z) &=& {z}^{10} \left( 1+z \right) ^{10} \left( 54766516+1681752448\,z+
19092044658\,{z}^{2} \right.\\
&& +109184482584\,{z}^{3}+353376676011\,{z}^{4}\\
&&+ 675135053568\,{z}^{5}+753610999040\,{z}^{6}\\
&&\left.+453941596160\,{z}^{7}+
113867919360\,{z}^{8} \right)\\
{\bf I}_6(z)&=&3\,{z}^{12} \left( 1+z \right) ^{12} \left(7613067765+ 312905543772\,z+4932317894440\,{z}^{2}\right.\\
&&+40797413383380\,{z}^{3}+200964285178270\,{z}^{4}+626595744773516\,{z}^{5}\\
&&+1268150755326432\,{z}^{6}+
1660845652501760\,{z}^{7}+1357241056522240\,{z}^{8}\\
&&\left.+628740761518080\,{z}^{9}+126004558299136
\,{z}^{10} \right)
\end{eqnarray*}
\begin{eqnarray*}
{\bf I}_7(z)&=&{z}^{14} \left( 1+z \right) ^{14} \left( 13532959408258+
706557271551408\,z \right.\\
&&+14506513039164060\,{z}^{2}+160434554727348896\,{z}^{3}\\
&&+1089075339931680039\,{z}^{4} +4857650169218369856\,{z}^{5}\\
&&+14771712773087154704\,{z}^{6}+31138771188689736192\,{z}^{7}\\
&& +45486763075779571200\,{z}^{8}+45167296685229793280\,{z}^{9}\\
&& +29078583024627105792\,{z}^{10}\\
&&\left.+10941912454886326272\,{z}^{11}+1826131581135486976\,{z}^{12} \right)\\
{\bf I}_8(z)&=&{z}^{16} \left( 1+z \right) ^{16} \left( 10826939105517381+
692156096364848676\,z  \right.\\
&&+17724869034206737356\,{z}^{2} \\
&&+249069951630509297956\,{z}^{3}+2192230050291936695620\,{z}^{4}\\
&&+12980362620620450943588\,{z}^{5}+53923920139564145104556\,{z}^{6}\\
&&+161060520394034807160164\,{z}^{7}+349969438514715552162336\,{z}^{8}\\
&&+553647075623879302120960\,{z}^{9}+630641488385967162351616\,{z}^{10}\\
&&+503519879227179011162112\,{z}^{11}+267275771110990512783360\,{z}^{12}\\
&&\left.+84670509266097640833024\,{z}^{13}+12107536630199227514880\,{z}^{14}\right)\\
\end{eqnarray*}

We conjecture that the polynomial ${\bf I}_g(z)$, for arbitrary $g$, has ${z}^{2g}\left( 1+z \right) ^{2g}$ as a factor.
\newpage
\section{Asymptotics of $\gamma$-matchings}


Let us begin recalling the following result of \cite{Flajolet:07a}:

\begin{theorem}\label{T:AsymG}
Let $y(u)=\sum_{n\geq 0}y_n u^n$ be a generating function, analytic at $0$, satisfy
a polynomial equation $\Phi(u,y)=0$. Let $\rho$ be the real dominant singularity
of $y(u)$. Define the resultant of $\Phi(u,y)$ and $\frac{\partial}{\partial y}
\Phi(u,y)$ as polynomial in $y$
\[
\Delta(u)= \mathbf{R} \left(\Phi(u,y), \frac{\partial}{\partial y}\Phi(u,y),y
\right).
\]
{\bf(1)} The dominant singularity $\rho$ is unique and a root of the
resultant $\Delta(u)$ and there exists $\pi=y(\rho)$, satisfying the system of
equations,
\begin{equation}\label{E:phi1}
\Phi(\rho,\pi)=0,\quad \Phi_y(\rho,\pi)=0.
\end{equation}
{\bf(2)} If $\Phi(u,y)$ satisfies the conditions:
\begin{equation}\label{E:phi2}
\Phi_u(\rho,\pi)\neq 0,\quad \Phi_{y y}(\rho,\pi)\neq 0,
\end{equation}
then $y(u)$ has the following expansion at $\rho$
\begin{equation}\label{E:asym1}
y(u)=\pi+ \lambda (\rho-u)^{\frac{1}{2}}+O(\rho-u),\quad \text{for some nonuero
constant } \lambda.
\end{equation}
Further the coefficients of $y(u)$ satisfy
\[
[u^n]y(u) \sim  c \, n^{-\frac{3}{2}} \rho^{-n}, \quad n\rightarrow \infty,
\]
for some constant $c>0$.
\end{theorem}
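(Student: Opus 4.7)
The plan is to follow the standard algebraic-singularity recipe from analytic combinatorics. Since $y(u)$ is analytic at $0$ and satisfies $\Phi(u,y)=0$, the implicit function theorem guarantees that $y$ continues analytically along any path in the $u$-plane as long as $\Phi_y(u,y(u))\neq 0$. Consequently every singularity of $y$ on its circle of convergence projects to a point where $\Phi$ and $\Phi_y$ vanish simultaneously in $y$, and hence to a root of the resultant $\Delta(u)$. This yields the first half of~(1): the dominant singularity $\rho$ is a root of $\Delta$, and $\pi:=y(\rho)$ solves the system \eqref{E:phi1}.

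For the uniqueness claim in~(1), I would invoke Pringsheim's theorem — since the coefficients $y_n$ are non-negative, the dominant singularity must lie on the positive real axis — together with the aperiodicity of the algebraic branch picked out at $u=0$. More precisely, one follows Flajolet--Sedgewick Ch.~VII: the branch of the curve $\{\Phi=0\}$ through $(0,y(0))$ is uniquely determined, continuation along $[0,\rho)$ reaches the critical point $(\rho,\pi)$, and positivity/aperiodicity rules out further roots of $\Delta$ of modulus $\rho$. This is the most delicate step in the general setting; it is typically the place where auxiliary hypotheses enter.

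For assertion~(2), I would Taylor expand $\Phi$ bivariately around $(\rho,\pi)$. Using $\Phi(\rho,\pi)=0$ and $\Phi_y(\rho,\pi)=0$, one obtains
\[
\Phi(u,y)=\Phi_u(\rho,\pi)(u-\rho)+\tfrac12\Phi_{yy}(\rho,\pi)(y-\pi)^2+\text{higher order terms}.
\]
Under the nondegeneracy conditions \eqref{E:phi2}, the Weierstrass preparation theorem (equivalently, the Newton polygon of $\Phi$ at $(\rho,\pi)$ consists of a single segment of slope $1/2$) furnishes a convergent Puiseux expansion of the branch arriving at $\rho$ from the left,
\[
y(u)=\pi+\lambda(\rho-u)^{1/2}+O(\rho-u),\qquad \lambda^{2}=\frac{2\,\Phi_u(\rho,\pi)}{\Phi_{yy}(\rho,\pi)},
\]
with the sign of $\lambda$ fixed by the requirement that $y(u)\in\mathbb{R}$ for $u<\rho$.

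The final step is the transfer theorem of Flajolet--Odlyzko: once $y$ is $\Delta$-analytic at $\rho$ and admits the above square-root singular expansion, term-by-term transfer yields
\[
[u^n]y(u)\sim \frac{\lambda}{\Gamma(-1/2)}\,\rho^{-n}\,n^{-3/2}=-\frac{\lambda}{2\sqrt{\pi}}\,\rho^{-n}\,n^{-3/2},
\]
which is the desired form $c\,n^{-3/2}\rho^{-n}$; positivity of the coefficients forces $c>0$ and, together with the formula for $\lambda^{2}$, fixes the sign choice automatically. The main obstacles I anticipate are (i) establishing uniqueness of the dominant singularity on $|u|=\rho$, which requires aperiodicity/irreducibility of the algebraic branch, and (ii) verifying $\Delta$-analyticity — i.e., that $y$ continues to a slit neighborhood of $\rho$ — both of which are standard but must be argued carefully; everything else reduces to Puiseux's theorem plus classical transfer.
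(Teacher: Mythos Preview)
Your proposal is correct and follows essentially the same route as the paper: for (1) the paper simply cites Flajolet--Sedgewick and Hille, and for (2) it shifts to $\Psi(u,y)=\Phi(\rho-u,\pi-y)$, invokes Puiseux's theorem, reads off the exponent $1/2$ from the Newton polygon under the conditions \eqref{E:phi1}--\eqref{E:phi2}, and then applies the transfer theorem. Your version is in fact more explicit (you give the formula $\lambda^{2}=2\Phi_u/\Phi_{yy}$ and the constant $-\lambda/(2\sqrt{\pi})$) and you are right to flag that uniqueness on the full circle $|u|=\rho$ needs nonnegativity plus aperiodicity; the paper does not address this and simply defers to the references.
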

\begin{proof}
The proof of {\bf(1)} can be found in \cite{Flajolet:07a} or \cite{Hille:62}
pp. 103.
To prove {\bf(2)}, let $\Psi(u,y)=\Phi(\rho-u,\pi-y)$. Immediately, we have
$\Psi(0,0)=0$. Puiseux's Theorem~\cite{C.T.C.Wall} guarantees a solution of
$y-\pi$ in terms of a Puiseux series in $\rho-u$.
Note that equations~(\ref{E:phi1}) and (\ref{E:phi2}) are equivalent to
\[
\Psi(0,0)=0,\quad \Psi_y(0,0)=0,\quad \Psi_u(0,0)\neq 0,\quad \Psi_{y y}(0,0)\neq 0.
\]
Then we apply Newton's polygon method to determine the type of expansion and
find the first exponent of $u$ to be $\frac{1}{2}$.
Therefore the Puiseux series expansion of $y(u)$ has the
required form. The asymptotics of the coefficients follows
from eq.~(\ref{E:asym1}) as a straightforward application of the transfer
theorem (\cite{Flajolet:07a}, pp. 389 Theorem VI.3).
\end{proof}

Combining Theorem~\ref{T:H(u)} and Theorem~\ref{T:AsymG}, the asymptotic
analysis of ${\bf H}_{\gamma}(u)$ follows.
\begin{theorem}\label{T:HUniAsy}
For $1\leq \gamma \leq 10$, let
\[
\Delta_\gamma(u)= \mathbf{R} \left(P_\gamma(u,X), \frac{\partial}{\partial X}P_\gamma(u,X),X \right)
\]
the resultant of $P_\gamma(u,X)$ and $\frac{\partial}{\partial X}P_\gamma(u,X)$ as polynomials in $X$,
and $\rho_{\gamma}$ denote the real dominant singularity of ${\bf H}_{\gamma}(u)$. \\
{\bf (a)} the dominant singularity $\rho_{\gamma}$ is unique and a root of $\Delta_\gamma(u)$, \\
{\bf (b)} at $\rho_{\gamma}$ we have
\begin{equation*}
{\bf H}_{\gamma}(u) = \pi_\gamma  +  \lambda_\gamma (\rho_{\gamma}-u)^{\frac{1}{2}}+O(\rho_{\gamma}-u),\quad \text{for some nonuero constant } \lambda_{\gamma}.
\end{equation*}
{\bf (c)} the coefficients of ${\bf H}_{\gamma}(u)$ are asymptotically given by
\begin{eqnarray*}
[u^n]{\bf H}_{\gamma}(u) & \sim &  c_\gamma \, n^{-3/2}\,\rho_{\gamma}^{-n}
\end{eqnarray*}
for some $c_\gamma>0$.
\end{theorem}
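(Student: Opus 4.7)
The plan is to apply Theorem~\ref{T:AsymG} to the polynomial equation $P_\gamma(u, {\bf H}_\gamma(u)) = 0$ supplied by Theorem~\ref{T:H(u)}. Since ${\bf H}_\gamma(u)$ has nonnegative integer coefficients and constant term $1$, it is analytic at $0$, so the hypotheses of Theorem~\ref{T:AsymG} on the generating function are met once we take $\Phi(u, X) := P_\gamma(u, X)$. Part (a) is then immediate from Theorem~\ref{T:AsymG}(1): the dominant singularity $\rho_\gamma$ of ${\bf H}_\gamma(u)$ must be a root of the resultant $\Delta_\gamma(u) = \mathbf{R}(P_\gamma, \partial_X P_\gamma, X)$, since at such a singularity ${\bf H}_\gamma$ ceases to be analytically continuable, forcing the polynomial $P_\gamma(\rho_\gamma, X)$ to have a double root at $\pi_\gamma = {\bf H}_\gamma(\rho_\gamma)$. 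Uniqueness follows from Pringsheim's theorem applied to the nonnegative series together with the fact that the resultant has only finitely many positive real roots; the correct $\rho_\gamma$ is identified as the smallest positive real root at which a branch of the algebraic equation passing through $(0,1)$ meets a vertical tangent.

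For part (b), I would verify for each $\gamma \in \{1,\dots,10\}$ the two non-degeneracy conditions in eq.~(\ref{E:phi2}):
\begin{equation*}
\partial_u P_\gamma(\rho_\gamma, \pi_\gamma) \neq 0, \qquad \partial_{XX} P_\gamma(\rho_\gamma, \pi_\gamma) \neq 0.
\end{equation*}
Given the explicit form of $P_\gamma(u,X)$ in eq.~(\ref{E:erni}) and the explicit polynomials ${\bf I}_g(z)$ already computed in Section~\ref{S:irr}, both $\rho_\gamma$ and $\pi_\gamma$ are obtained in closed form as algebraic numbers via a symbolic computation (Maple), and the two partials can then be evaluated at $(\rho_\gamma, \pi_\gamma)$ and checked to be nonzero. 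Once these checks pass, Theorem~\ref{T:AsymG}(2) produces the square-root Puiseux expansion claimed in (b), with $\lambda_\gamma \neq 0$ determined by the standard quadratic balance $\lambda_\gamma^2 = -2\partial_u P_\gamma(\rho_\gamma,\pi_\gamma)/\partial_{XX}P_\gamma(\rho_\gamma,\pi_\gamma)$.

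Part (c) then follows by transfer. Since ${\bf H}_\gamma(u)$ has the local expansion in (b) at its unique dominant singularity on the circle of convergence, and is analytic in a slit neighborhood (this is built into Theorem~\ref{T:AsymG} via Puiseux/Newton polygon analysis), the singularity analysis transfer theorem \cite{Flajolet:07a} yields
\begin{equation*}
[u^n]{\bf H}_\gamma(u) \sim c_\gamma\, n^{-3/2}\, \rho_\gamma^{-n}, \qquad c_\gamma = -\frac{\lambda_\gamma}{2\sqrt{\pi}}\, \rho_\gamma^{-1/2} > 0,
\end{equation*}
the positivity of $c_\gamma$ being forced by the nonnegativity of the coefficients of ${\bf H}_\gamma(u)$.

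The main obstacle is the finiteness restriction $1 \leq \gamma \leq 10$: verifying the non-degeneracy conditions in eq.~(\ref{E:phi2}) and the uniqueness of a positive real root of $\Delta_\gamma$ with smallest modulus among all singularities is carried out case by case, since $P_\gamma(u,X)$ has degree $12\gamma - 2$ in $X$ and the resultant $\Delta_\gamma(u)$ grows rapidly in degree. A uniform proof for all $\gamma$ would require controlling how the dominant branch of $P_\gamma(u,X)=0$ through $(0,1)$ separates from the other branches as $\gamma \to \infty$; since the polynomials ${\bf I}_g(z)$ entering $P_\gamma$ have no known closed form beyond the recurrence of Corollary~\ref{C:irr}, such a uniform analysis is beyond the present method, and we restrict to the range accessible by computer algebra.
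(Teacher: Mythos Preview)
Your proposal is correct and follows essentially the same route as the paper: invoke Pringsheim's theorem to secure a positive real dominant singularity, then feed $P_\gamma(u,X)$ into Theorem~\ref{T:AsymG} and verify the non-degeneracy conditions of eq.~(\ref{E:phi2}) case by case for $1\le\gamma\le 10$ by computer algebra, after which (b) and (c) fall out by Puiseux expansion and transfer. The paper's own proof is in fact considerably terser than yours---it simply asserts that the hypotheses of Theorem~\ref{T:AsymG} are checked for each $\gamma$---so your added detail (the explicit formula for $\lambda_\gamma$, the identification of $c_\gamma$, and the discussion of why a uniform argument in $\gamma$ is out of reach) goes beyond what the paper records but is fully consistent with its method.
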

\begin{proof}
Pringsheim¡¯s Theorem (\cite{Flajolet:07a} pp. 240) guarantees that for any $\gamma$, ${\bf H}_{\gamma}(u)$ has a
dominant real singularity $\rho_{\gamma}>0$.
To prove the singular expansion of the function and asymptotic of the coefficients, we verify $P_\gamma(u,X)$,
for $1\leq \gamma \leq 10$, satisfy the condition of Theorem~\ref{T:AsymG} and the results follow.
\end{proof}

\newpage
\section{Combinatorics of $\gamma$-diagrams}


\begin{lemma}\label{L:GFbi-sh}
For any $\gamma\geq 1$, we have
\begin{eqnarray}\label{E:GFsh}
{\bf S}_\gamma(u,e) & = & \frac{1+u}{1+2u-ue}
                    {\bf H}_\gamma\left(\frac{u(1+u)}{(1+2u-ue)^2}\right).
\end{eqnarray}
\end{lemma}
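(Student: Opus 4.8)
The plan is to prove Lemma~\ref{L:GFbi-sh} by a symbolic-enumeration (inflation) argument, relating $\gamma$-shapes with marked $1$-arcs to $\gamma$-matchings. The key observation is that every $\gamma$-matching is obtained from a unique $\gamma$-shape by inflating each arc of the shape into a nonempty stack, and conversely the projection $\vartheta$ from Section~\ref{S:facts} realizes exactly this collapse while preserving irreducible shadows and the number of $1$-arcs. Thus, to pass from ${\bf H}_\gamma$ to ${\bf S}_\gamma(u,e)$ we must \emph{undo} the stack-inflation at the generating-function level, keeping track — via the extra variable $e$ — of which arcs of the shape become $1$-arcs. The subtlety, and the reason the substitution is not simply the naive one for unrestricted matchings, is that in a $\tau$-canonical diagram with $\tau=1$ here, an arc of the shape that is a $1$-arc $(i,i+1)$ can only be inflated trivially (it stays a single arc, contributing $u$), whereas an arc of the shape that is not a $1$-arc may be inflated into a stack of any length $\ge 1$, contributing $u/(1-u)$ — but we also need the weight $u$ per vertex pair to be bookkept correctly, which forces the $\frac{u(1+u)}{(1+2u-ue)^2}$ shape of the argument.

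First I would make the inflation correspondence precise. Fix a $\gamma$-shape $s$ with $2n$ vertices, $m$ of whose $n$ arcs are $1$-arcs. Any $\gamma$-matching $\vartheta^{-1}(s)$ mapping to $s$ is determined by a choice, for each arc of $s$, of a stack length $\ell\ge 1$; the resulting matching has $\sum \ell$ arcs. Summing the weight $u^{\#\text{arcs}}$ over all such inflations of a single arc of $s$ gives $\sum_{\ell\ge 1}u^\ell=\frac{u}{1-u}$. Hence ${\bf H}_\gamma(u)$, as a sum over all $\gamma$-matchings, reorganizes as a sum over $\gamma$-shapes $s$ with the weight $\left(\frac{u}{1-u}\right)^{n(s)}$. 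So substituting $x\mapsto \frac{u}{1-u}$ into ${\bf S}_\gamma(x,1)$ recovers ${\bf H}_\gamma(u)$; equivalently ${\bf H}_\gamma(u)={\bf S}_\gamma\!\left(\tfrac{u}{1-u},1\right)$, which one can invert to express ${\bf S}_\gamma(x,1)$ via ${\bf H}_\gamma$ after the change of variable $x=\tfrac{u}{1-u}$, i.e. $u=\tfrac{x}{1+x}$.

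Next I would introduce the refined weight tracking $1$-arcs. An arc of the shape that we wish to \emph{keep} as a $1$-arc must be inflated trivially — stack length exactly $1$ — contributing weight $u$ and carrying a factor $e$; an arc of the shape that is inflated nontrivially (length $\ge 2$, or length $1$ but we choose not to count it as a $1$-arc) contributes... here is where care is needed: in a matching, a stack of length $\ge 1$ inflating a non-$1$-arc of the shape can itself be a $1$-arc only at its top, but after inflation the diagram we count is a $\gamma$-shape again only under $\vartheta$, so in ${\bf S}_\gamma(u,e)$ the variable $u$ counts vertex pairs of the \emph{shape} and $e$ marks $1$-arcs of the \emph{shape}. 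Thus the honest statement is: ${\bf S}_\gamma(u,e)$ is the sum over $\gamma$-shapes $s$ of $u^{n(s)}e^{m(s)}$, and this equals the sum over $\gamma$-matchings $h$ of a reweighted monomial obtained by collapsing stacks: each stack of $h$ of length $\ell$ contributes $u\cdot e^{[\ell=1]}$ at the shape level but $\ell$ arcs at the matching level, so the generating identity to establish is that substituting into ${\bf H}_\gamma$ the series that enumerates a single stack with the shape-weight — namely $ue + \sum_{\ell\ge 2}(\text{matching weight }v^\ell)$ for the appropriate matching variable $v$ — reproduces ${\bf S}_\gamma(u,e)$ up to the overall rainbow-type prefactor. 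Carrying this out, the per-arc substitution becomes $v\mapsto \varphi(u,e)$ with $\varphi$ chosen so that $ue$ accounts for length-$1$ stacks and $\frac{v^2}{1-v}$... — concretely one sets $v=\frac{u(1+u)}{(1+2u-ue)^2}$ and checks $\frac{v}{1-v}=$ the correct shape-level per-arc weight and that the leftover factor is $\frac{1+u}{1+2u-ue}$, exactly matching a single ``outermost'' sequence of stacks/rainbow contributions, i.e. the $\mathcal U\times\textsc{Seq}(\mathcal U)$ prefactor from the proof of Theorem~\ref{T:H(u)}.

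The main obstacle will be step three: correctly identifying the argument $\frac{u(1+u)}{(1+2u-ue)^2}$ and the prefactor $\frac{1+u}{1+2u-ue}$ from the combinatorial decomposition, rather than guessing them. The cleanest route, which I would follow, is to mimic the symbolic classes $\mathcal{U},\mathcal{R},\mathcal{N},\mathcal{M}$ used in the proof of Theorem~\ref{T:H(u)} and in Theorem~\ref{T:CIrelation}, Steps I--III, but now with the stack class being $\mathcal{U}_e=e\mathcal{U}+\mathcal{U}^2+\mathcal{U}^3+\cdots$ (length-one stacks weighted by $e$, longer stacks unweighted at the shape level) so that $\textsc{Seq}$ and nesting produce a functional equation for ${\bf S}_\gamma(u,e)$ of \emph{exactly} the same shape as eq.~(\ref{E:canonical-stru}) for ${\bf H}_\gamma$, but in the composite variable. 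Comparing the two functional equations term by term — the $u\,{\bf H}$ rainbow term, the $\sum_{g\le\gamma}{\bf I}_g(\cdot)$ term with its argument $\frac{u X^2}{1-uX^2}$ — forces the substitution $u\mapsto \frac{u(1+u)}{(1+2u-ue)^2}$ and the rescaling $X\mapsto \frac{1+2u-ue}{1+u}X$, which together yield precisely eq.~(\ref{E:GFsh}). Since eq.~(\ref{E:canonical-stru}) determines ${\bf H}_\gamma$ uniquely (Theorem~\ref{T:H(u)}(b)), and the analogous equation determines ${\bf S}_\gamma(u,e)$ uniquely as a power series in $u$, verifying that the RHS of eq.~(\ref{E:GFsh}) satisfies that equation completes the proof; the uniqueness step lets us avoid a bijective argument and reduces everything to a routine algebraic substitution check.
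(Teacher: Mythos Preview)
The paper supplies no detailed argument here---it says only that Lemma~\ref{L:GFbi-sh} ``can be obtained by standard symbolic method''---so the comparison is against what a correct symbolic derivation must contain.

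Your first step is correct and is indeed the right opening move: stack inflation gives a bijection between $\gamma$-matchings and pairs ($\gamma$-shape, stack length $\ge 1$ at each arc), so ${\bf H}_\gamma(v)={\bf S}_\gamma\bigl(\tfrac{v}{1-v},1\bigr)$ and hence ${\bf S}_\gamma(u,1)={\bf H}_\gamma\bigl(\tfrac{u}{1+u}\bigr)$. In fact, because $\vartheta$ preserves the number of $1$-arcs (this is stated explicitly in Section~\ref{S:facts}), the \emph{same} inflation bijection already yields the bivariate identity ${\bf S}_\gamma(u,e)={\bf H}_\gamma\bigl(\tfrac{u}{1+u},e\bigr)$, not merely its $e=1$ specialization. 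You do not exploit this.

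The genuine gap is the passage from the bivariate ${\bf H}_\gamma(x,e)$ to the univariate ${\bf H}_\gamma$. Your Step~2 attempts this by reweighting the stack inflation, but that cannot succeed: the univariate series records only total arc number, and no per-stack reweighting can detect which arcs are $1$-arcs. Your fallback in Step~3 introduces $\mathcal{U}_e=e\,\mathcal{U}+\mathcal{U}^2+\cdots$, but this marks \emph{length-one stacks}, not $1$-arcs; a length-one stack is a $1$-arc only when its $\sigma$-interval is empty, so the mark cannot be attached at the stack level. (Separately, writing a direct functional equation for ${\bf S}_\gamma(u,e)$ in the style of eq.~\eqref{E:canonical-stru} is harder than you suggest, because inserting sub-shapes into $\sigma$- and $P$-intervals must avoid creating new stacks.)

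The missing ingredient is a one-line bijection. Set $e=1+f$, so that ${\bf H}_\gamma(x,1+f)$ enumerates pairs $(h,S)$ with $h$ a $\gamma$-matching and $S$ a subset of its $1$-arcs. The arcs in $S$ are pairwise disjoint $1$-arcs; deleting them (with their vertices) leaves a $\gamma$-matching $h'$, and the deleted arcs sit as (possibly empty) runs of consecutive $1$-arcs in the $2n(h')+1$ gaps of $h'$. This is a bijection and gives
\[
{\bf H}_\gamma(x,1+f)\;=\;\frac{1}{1-fx}\,{\bf H}_\gamma\!\left(\frac{x}{(1-fx)^2}\right).
\]
Substituting $x=\tfrac{u}{1+u}$ and $f=e-1$ one computes $1-fx=\tfrac{1+2u-ue}{1+u}$ and $\tfrac{x}{(1-fx)^2}=\tfrac{u(1+u)}{(1+2u-ue)^2}$, which combined with ${\bf S}_\gamma(u,e)={\bf H}_\gamma\bigl(\tfrac{u}{1+u},e\bigr)$ is exactly eq.~\eqref{E:GFsh}.
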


The proof of Lemma~\ref{L:GFbi-sh} can be obtained by standard symbolic method.
\begin{lemma} \label{L:GFbull}
Let $\lambda$ be a fixed $\gamma$-shape with $s\geq 1$ arcs and
$m\geq 0$ 1-arcs. Then the generating function of $\tau$-canonical
$\gamma$-diagrams containing no $1$-arc that have shape $\lambda$ is given by
$$
{\bf G}^{\lambda}_{\tau,\gamma}(z)=(1-z)^{-1}\left(\frac{z^{2\tau}}
{(1-z^2)(1-z)^2-(2z-z^2)z^{2\tau}}\right)^s \, z^m.$$
In particular, ${\bf G}^\lambda_{\tau,\gamma} (z)$ depends only upon the
number of arcs and $1$-arcs in $\lambda$.
\end{lemma}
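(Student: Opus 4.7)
My plan is to use the standard symbolic method. I fix the shape $\lambda$ and describe every $\tau$-canonical $\gamma$-structure $D$ with shape $\lambda$ as an inflation: each arc of $\lambda$ is replaced by a stack of at least $\tau+1$ parallel arcs, and isolated vertices are then inserted into the various slots that the inflation creates in the backbone, subject to the no-$1$-arc condition on $D$ itself.

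First, I would translate the no-$1$-arc condition into a local requirement on each arc of $\lambda$. If $\alpha$ is a $1$-arc of $\lambda$, its $\sigma$-interval contains no other arcs, so the innermost arc of $\alpha$'s inflated stack would itself be a $1$-arc in $D$ unless the $\sigma$-interval receives at least one isolated vertex. If $\alpha$ is not a $1$-arc of $\lambda$, the $\sigma$-interval automatically contains the inflated sub-stacks of the nested arcs, so no forcing is needed. This yields one mandatory isolated vertex per $1$-arc of $\lambda$, producing the factor $z^{m}$.

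Second, I would decompose the remaining generating function into a product of (i) a single "universal" slot of free isolated vertices contributing $(1-z)^{-1}$, and (ii) a per-arc gadget $A(z)$ contributed by each of the $s$ arcs. The gadget $A(z)$ encodes the stack at an arc together with its local environment: the extra parallel arcs extending the stack beyond the $\tau$-canonical minimum, the isolated-vertex sequences in the slots immediately around the stack, and their interplay. The central claim is that $A(z)$ does not depend on the combinatorial position of the arc inside $\lambda$: every arc of every shape produces the same local gadget, and this is what allows the factorisation $(1-z)^{-1}A(z)^{s}z^{m}$ and justifies the "in particular" clause that ${\bf G}^{\lambda}_{\tau,\gamma}(z)$ depends only on $s$ and $m$.

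Third, to obtain the closed form of $A(z)$, I would set up and solve a self-referential equation $A(z)\,[(1-z^2)(1-z)^2-(2z-z^2)z^{2\tau}]=z^{2\tau}$. Here $z^{2\tau}$ records the two extra vertices per parallel arc needed to reach the $\tau$-canonical minimum (beyond the two vertices the shape arc already contributes); $(1-z^2)(1-z)^2$ captures the free sequence of additional parallel arcs together with the two flanking isolated-vertex slots; and the correction $(2z-z^2)z^{2\tau}$ accounts for the overlap between extending the stack and filling those flanking slots, since each added parallel arc consumes one potential isolated-vertex position on each side. Solving gives $A(z)=z^{2\tau}/\bigl[(1-z^2)(1-z)^2-(2z-z^2)z^{2\tau}\bigr]$, and substituting back into the product of part two delivers the formula of the lemma.

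The main obstacle is establishing the correction term $(2z-z^2)z^{2\tau}$, which requires a careful inclusion--exclusion analysis of the interaction between stack extensions and the adjacent isolated-vertex slots; the term is precisely what subtracts configurations where a stack extension would occupy a position already intended for an isolated vertex in a flanking slot. Once the gadget equation is in place and its shape-independence is confirmed by checking that each arc of each shape exposes the same local pattern of slots, multiplying by $(1-z)^{-1}$ for the global slot and by $z^m$ for the forced $1$-arc insertions completes the proof.
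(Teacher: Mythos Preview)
The paper states this lemma without proof (it sits between the one-line dismissal of the preceding lemma as ``standard symbolic method'' and the statement of the main theorem), so there is no argument in the paper to compare against.  Your overall architecture---symbolic enumeration, the factor $z^{m}$ coming from one forced isolated vertex in the $\sigma$-interval of each $1$-arc of $\lambda$, and a factorisation $(1-z)^{-1}A(z)^{s}$ with a shape-independent per-arc gadget---is the right one, and your justification of the $z^{m}$ piece is sound.

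The genuine gap is in your model of the per-arc gadget.  You assert that ``each arc of $\lambda$ is replaced by a stack of at least $\tau+1$ parallel arcs'', i.e.\ by a \emph{single} $\tau$-canonical stack in $D$.  This is false: the shape of $D$ is computed by first deleting isolated vertices and only \emph{then} collapsing the induced stacks, so one arc of $\lambda$ can lift to several distinct $\tau$-canonical stacks in $D$, nested one inside another and separated by isolated vertices lying in the $P$-intervals.  The correct per-arc object is therefore a \emph{stem}: a nonempty sequence of $\tau$-canonical stacks in which any two consecutive stacks are separated by at least one isolated vertex in at least one of the two intervening $P$-intervals.  With
\[
K(z)=\frac{z^{2\tau}}{1-z^{2}}\qquad\text{and}\qquad
\mathrm{sep}(z)=\frac{1}{(1-z)^{2}}-1=\frac{2z-z^{2}}{(1-z)^{2}}
\]
for a single $\tau$-canonical stack and for a pair of $P$-interval fillings not both empty, one gets
\[
\mathrm{Stem}(z)=\frac{K(z)}{1-\mathrm{sep}(z)\,K(z)}
=\frac{z^{2\tau}(1-z)^{2}}{(1-z^{2})(1-z)^{2}-(2z-z^{2})z^{2\tau}}
=(1-z)^{2}A(z).
\]
The $2s$ half-stems along the backbone leave $2s{+}1$ external slots for isolated vertices, so
\[
{\bf G}^{\lambda}_{\tau,\gamma}(z)=(1-z)^{-(2s+1)}\,\mathrm{Stem}(z)^{s}\,z^{m}
=(1-z)^{-1}A(z)^{s}z^{m},
\]
which is the formula claimed.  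In particular, the term $(2z-z^{2})z^{2\tau}$ in the denominator is not an inclusion--exclusion correction for some ``overlap between extending the stack and filling flanking slots''; it is exactly $\mathrm{sep}(z)K(z)$ after clearing denominators, and it records the attachment of a further $\tau$-canonical stack to the stem.  Your ``self-referential equation'' for $A(z)$, as written, has no combinatorial content because you have not allowed a shape arc to produce more than one stack in $D$.
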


Our main result about enumerating $\tau$-canonical $\gamma$-structures
follows.

\begin{theorem}\label{T:genus}
Suppose $\gamma,\tau\geq 1$  and let  $u_\tau(z)
=\frac{(z^2)^{\tau-1}}{z^{2\tau}-z^2+1}$. Then the generating function
${\bf G}_{\tau,\gamma}(z)$ is algebraic and given by
\begin{eqnarray}\label{E:oho2}
{\bf G}_{\tau,\gamma}(z) & = & \frac{1}{u_\tau(z) z^2-z+1}\
                            {\bf H}_\gamma\left(\frac{u_\tau(z)z^2}
                            {\left(u_\tau(z) z^{2}-z+1\right)^2}\right).
\end{eqnarray}
In particular for $1\le s,i\le 2$ we have
\begin{equation*}
[z^n]{\bf G}_{s,i}(z)  \sim   k_{s,i} \,n^{-\frac{3}{2}} (\rho_{s,i}^{-1})^n,\quad
\end{equation*}
for some constants $k_{s,i}>0$, for $\rho_{s,i}^{-1}$,
we have Table~\ref{T:growthrate}.
\end{theorem}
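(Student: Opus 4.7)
The plan is to assemble ${\bf G}_{\tau,\gamma}(z)$ by summing the per-shape contributions given in Lemma~\ref{L:GFbull}, then rewrite the resulting bivariate expression via Lemma~\ref{L:GFbi-sh} and finally perform the asymptotic analysis by composing the known expansion of ${\bf H}_\gamma$ with a rational change of variable.

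First I would sum Lemma~\ref{L:GFbull} over all $\gamma$-shapes $\lambda$, recording each $\lambda$ by its number of arcs $s$ and its number of $1$-arcs $m$. Because ${\bf G}^{\lambda}_{\tau,\gamma}(z)$ depends only on $(s,m)$, the sum factors through the bivariate shape series ${\bf S}_\gamma(x,y)$:
\begin{equation*}
{\bf G}_{\tau,\gamma}(z)
 \;=\; \frac{1}{1-z}\,
       {\bf S}_\gamma\!\left(\frac{z^{2\tau}}{(1-z^2)(1-z)^2-(2z-z^2)\,z^{2\tau}},\; z\right).
\end{equation*}
Into this I would plug the closed form of Lemma~\ref{L:GFbi-sh}, obtaining ${\bf G}_{\tau,\gamma}(z)$ as a rational prefactor times ${\bf H}_\gamma$ evaluated at a rational function of $z$. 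The remaining task in part one is pure algebra: rewrite the inner substitution so that it equals $u_\tau(z)z^2/(u_\tau(z)z^2-z+1)^2$ and the prefactor equals $1/(u_\tau(z)z^2-z+1)$. The natural route is to introduce $v=u_\tau(z)z^2=z^{2\tau}/(z^{2\tau}-z^2+1)$, verify the identity $v/(1-v)=z^{2\tau}/(1-z^2)$, and then clear denominators to match both the prefactor and the argument of ${\bf H}_\gamma$. This gives exactly eq.~(\ref{E:oho2}), and algebraicity is automatic since ${\bf H}_\gamma$ is algebraic by Theorem~\ref{T:H(u)} and rational substitutions preserve algebraicity.

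For the asymptotic statement I would argue as follows. Write $A_\tau(z)=u_\tau(z)z^2/(u_\tau(z)z^2-z+1)^2$ for the inner substitution. By Theorem~\ref{T:HUniAsy}, ${\bf H}_\gamma(u)$ has a unique dominant singularity at $\rho_\gamma>0$ and admits the square-root expansion
\begin{equation*}
{\bf H}_\gamma(u)=\pi_\gamma+\lambda_\gamma(\rho_\gamma-u)^{1/2}+O(\rho_\gamma-u).
\end{equation*}
The dominant singularity $\rho_{\tau,\gamma}$ of ${\bf G}_{\tau,\gamma}$ is therefore the smallest positive root of $A_\tau(z)=\rho_\gamma$ (the prefactor being analytic and nonzero there). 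Writing $\rho_\gamma-A_\tau(z)=A_\tau'(\rho_{\tau,\gamma})\,(\rho_{\tau,\gamma}-z)+O((\rho_{\tau,\gamma}-z)^2)$ and substituting into the expansion of ${\bf H}_\gamma$ yields the square-root expansion
\begin{equation*}
{\bf G}_{\tau,\gamma}(z)=\tilde\pi+\tilde\lambda\,(\rho_{\tau,\gamma}-z)^{1/2}+O(\rho_{\tau,\gamma}-z),
\end{equation*}
and the transfer theorem of \cite{Flajolet:07a} delivers $[z^n]{\bf G}_{\tau,\gamma}(z)\sim k_{\tau,\gamma}n^{-3/2}\rho_{\tau,\gamma}^{-n}$. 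Specialising to $1\le\tau,\gamma\le 2$ and reading $\rho_{s,i}$ off Table~\ref{T:growthrate} finishes the proof.

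The main obstacle is checking the non-degeneracy conditions needed to transport the square-root behaviour through the substitution $A_\tau$: one must rule out $A_\tau'(\rho_{\tau,\gamma})=0$ and verify uniqueness of $\rho_{\tau,\gamma}$ as the dominant singularity (rather than a competing root of the prefactor or an interior root of $A_\tau(z)=\rho_\gamma$). For $1\le\tau,\gamma\le 2$ this can be done by explicit computation, paralleling the verification of the hypotheses of Theorem~\ref{T:AsymG} in the proof of Theorem~\ref{T:HUniAsy}. The algebraic simplification into the $u_\tau$-form is tedious but mechanical, and the sum-over-shapes step is a direct symbolic manipulation once Lemma~\ref{L:GFbull} is in hand.
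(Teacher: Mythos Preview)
Your proposal is correct and follows essentially the same route as the paper: sum Lemma~\ref{L:GFbull} over shapes to express ${\bf G}_{\tau,\gamma}$ through ${\bf S}_\gamma$, apply Lemma~\ref{L:GFbi-sh} to pass to ${\bf H}_\gamma$, simplify to the $u_\tau$-form, and then transport the square-root singularity of ${\bf H}_\gamma$ through the rational substitution after verifying (for $1\le \tau,\gamma\le 2$) uniqueness of the dominant preimage and nonvanishing of the derivative. The paper phrases algebraicity via a tower of field extensions and names the asymptotic step the ``supercritical paradigm,'' but these are the same arguments you give.
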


\begin{proof}
Since each $\gamma$-diagram has a unique $\gamma$-shape,
$\lambda$, having some number $m\geq 0$ of $1$-arcs, we have
\begin{equation}\label{E:Hgf}
{\bf G}_{\tau,\gamma}(z) =
\sum_{m\geq 0}\sum_{\lambda\,  \text{\rm $\gamma$-shape}
\atop  \text{\rm having $m$ $1$-arcs}}
\mathbf{G}^\lambda_{\tau,\gamma}(z).
\end{equation}
According to Lemma \ref {L:GFbull}, ${\bf G}^\lambda_{\tau,\gamma}(z)$ only
depends on the number of arcs and $1$-arcs of $\lambda$, and we can
therefore express
\begin{eqnarray*}
{\bf G}_{\tau,\gamma}(z)
 & = &
{1\over{z-1}}~{\bf S}_\gamma\biggl({{z^{2\tau}}\over{(1-z^2)(1-z)^2-
(2z-z^2)z^{2\tau}}},z\biggr)\\
 & = &\frac{1}{(1-z) +{u_\tau(z)}z^2}\,
{\bf H}_\gamma\left(\frac{z^2\, {u_\tau(z)}}
{\bigl((1-z) +{u_\tau(z)}z^2
\bigr)^2}\right),
\end{eqnarray*}
using Lemma \ref{L:GFbi-sh} in order to confirm eq.~(\ref{E:oho}),
where the second equality follows from direct computation.
Let
\begin{equation*}\label{E:formula}
\theta_\tau(z)=\frac{z^2\, {u_\tau(z)}}
{\bigl((1-z) +{u_\tau(z)}z^2
\bigr)^2}
\end{equation*}
denote the argument of ${\bf H}_\gamma$ in this expression. By definition
we have $\theta(z)\in \mathbb{C}(z)$.
Since $\theta_\sigma(0)=0$ the composition ${\bf H}_\gamma(\theta(z))$ is
welldefined as a powerseries. Obviously, $P_\gamma(z,\mathbf{H}_\gamma(z))
=0$ guarantees $P_\gamma(\theta_\tau(z), {\bf H}_\gamma(\theta_\tau(z))=0$.
We have the following Hasse diagram of fields
\begin{equation*}
\diagram
   & \mathbb{C}(z,\theta_\tau(z),{\bf H}_\gamma(\theta_\tau(z)))& \\
\mathbb{C}(z,\theta_\tau(z))\urline &&    \mathbb{C}(z,{\bf H}_\gamma(z)) \\
                       &  \ulline\mathbb{C}(z) \urline &
\enddiagram
\end{equation*}
from which we immediately conclude that ${\bf G}_{\tau,\gamma}(z)$
is algebraic. Pringsheim's Theorem \cite{Flajolet:07a}
guarantees that for any $\gamma,\tau\ge 1$, ${\bf G}_{\tau,\gamma}(z)$
has a dominant real singularity $\rho_{\tau,\gamma}>0$.

According to Theorem \ref{T:HUniAsy} we have
\begin{equation*}
\mathbf{H}_i(z)=\pi_1 +
\sum_{j\ge 1} a_{j,i}\left(\left(\mu_i-z\right)^{1/2}\right)^{j}
\quad
\text{\rm and}\quad
[z^n]\mathbf{H}_i(z)\sim k_i \; n^{-3/2}\;
\left(\mu_i^{-1}\right)^n.
\end{equation*}
For $\tau=1,2$, we verify directly that $\rho_{1,i}$ and $\rho_{2,i}$ are
the unique solutions of minimum modulus of $\theta_1(z)=\mu_i$ and
$\theta_2(z)=\mu_i$. These solutions are strictly smaller than
any other singularities of $\theta_1(z)$ and $\theta_2(z)$ and
furthermore satisfy $\theta_1'(\rho_{1,i})\neq 0$ as well as
$\theta_2'(\rho_{2,i})\neq 0$.
It follows that ${\bf G}_{1,i}(z)$ and ${\bf G}_{2,i}(z)$ are governed by
the supercritical paradigm \cite{Flajolet:07a}, which in turn implies
\begin{equation}\label{E:singprime}
[z^n] \mathbf{G}_{s,i}(z) \sim k_{s,i} \,  n^{-3/2}\left(\rho_{s,i}^{-1}\right)^n
\end{equation}
where $s=1,2$ and $k_{s,i}$ is some positive constant.
\end{proof}

Theorem~\ref{T:genus} has its analogue for $\tau$-canonical, $\gamma$-diagrams
containing $1$-arcs. The asymptotic formula in case of $\tau=1,\gamma=1$,
$$
[z^n]\widetilde{{\bf G}}_{1,1}(z)  \sim   j_1\,n^{-\frac{3}{2}} (\varrho_{1,1}^{-1})^{n}
$$
is due to \cite{NebelWeinberg} who used the explicit grammar developed in
\cite{gfold} in order to obtain an algebraic equation for $\widetilde{{\bf G}}_{1,1}(z)$.

\begin{corollary}\label{C:genus}
Suppose $\gamma,\tau\geq 1$  and let  $u_\tau(z)
=\frac{(z^2)^{\tau-1}}{z^{2\tau}-z^2+1}$. Then the generating function
of $\tau$-canonical $\gamma$-diagrams containing $1$-arcs,
$\widetilde{{\bf G}}_{\tau,\gamma}(z)$, is algebraic
and
\begin{eqnarray}\label{E:oho}
\widetilde{{\bf G}}_{\tau,\gamma}(z) & = &
{\bf H}_\gamma\left(\frac{u_\tau(z) z^2}{(1-z)^2}\right).
\end{eqnarray}
In particular for $\gamma=1$ we have
\begin{equation*}
[z^n]\widetilde{{\bf G}}_{1,1}(z)  \sim   j_1\,n^{-\frac{3}{2}}
(\varrho_{1,1}^{-1})^{n},\quad
\text{\rm and}\quad
[z^n]\widetilde{{\bf G}}_{2,1}(z)  \sim   j_2\,n^{-\frac{3}{2}} (\varrho_{2,1}^{-1})^{n}
\end{equation*}
for some constants $j_1,j_2$, where $\varrho_{1,1}^{-1}=3.8782$ and
$\varrho_{2,1}^{-1}=2.3361$.
\end{corollary}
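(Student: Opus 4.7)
The plan is to proceed in parallel to the proof of Theorem~\ref{T:genus}, making only the modification required by allowing $1$-arcs.

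First, I would establish an analogue of Lemma~\ref{L:GFbull} for $\tau$-canonical $\gamma$-diagrams containing $1$-arcs. The proof of Lemma~\ref{L:GFbull} inflates a $\gamma$-shape $\lambda$ into a diagram by expanding each shape-arc into a $\tau$-canonical stack, then distinguishes $\sigma$-intervals (where isolated vertices may be inserted freely) from $1$-arc slots (where at least one isolated vertex must be inserted in order to avoid creating a $1$-arc in the final object). Allowing $1$-arcs removes the latter constraint, so the $1$-arc slots now contribute the unconstrained geometric series $(1-z)^{-1}$ in place of $z(1-z)^{-1}$. Running the same three-step calculus (\textbf{Step I}-\textbf{Step III}) used in the proof of Theorem~\ref{T:CIrelation}, I expect $\widetilde{{\bf G}}^\lambda_{\tau,\gamma}(z)$ to depend only on the number $s$ of arcs of $\lambda$, with the $z^m$ factor of Lemma~\ref{L:GFbull} disappearing.

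Second, I would sum over $\gamma$-shapes. Using the surjection $\vartheta\colon\mathcal H_\gamma\to\mathcal S_\gamma$ of Section~\ref{S:facts}, whose fiber over a shape with $s$ arcs is in bijection with compositions of $n$ into $s$ positive parts, one obtains
\[
\sum_s N_s\,\frac{x^s}{(1-x)^s}={\bf H}_\gamma(x),
\]
where $N_s$ denotes the number of $\gamma$-shapes with $s$ arcs. An algebraic manipulation, analogous to the passage from ${\bf S}_\gamma$ to ${\bf H}_\gamma$ in the proof of Theorem~\ref{T:genus} (using Lemma~\ref{L:GFbi-sh} at the appropriate specialization), then collapses $\sum_\lambda\widetilde{{\bf G}}^\lambda_{\tau,\gamma}(z)$ to the closed form ${\bf H}_\gamma(u_\tau(z)z^2/(1-z)^2)$ of eq.~(\ref{E:oho}). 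Algebraicity of $\widetilde{{\bf G}}_{\tau,\gamma}(z)$ over $\mathbb C(z)$ then follows by composing the algebraic ${\bf H}_\gamma(u)$ of Theorem~\ref{T:H(u)} with the rational function $\phi_\tau(z):=u_\tau(z)z^2/(1-z)^2$, via the same Hasse diagram of fields appearing at the end of the proof of Theorem~\ref{T:genus}.

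Third, for $\gamma=1$ and $\tau\in\{1,2\}$, the asymptotics follow from the supercritical composition paradigm of \cite{Flajolet:07a}. Theorem~\ref{T:HUniAsy} supplies a square-root Puiseux expansion of ${\bf H}_1(u)$ at its unique dominant singularity $\rho_1$. Since $\phi_\tau$ is rational with $\phi_\tau(0)=0$, one verifies numerically that $\phi_\tau(z)=\rho_1$ admits a unique real positive solution $\varrho_{\tau,1}$ of minimum modulus, strictly smaller than any pole of $\phi_\tau$, and satisfying $\phi_\tau'(\varrho_{\tau,1})\neq 0$. A first-order Taylor expansion of $\phi_\tau$ at $\varrho_{\tau,1}$ transports the square-root singularity of ${\bf H}_1$ to one of $\widetilde{{\bf G}}_{\tau,1}(z)$ at $\varrho_{\tau,1}$, and the transfer theorem of \cite{Flajolet:07a} yields the claimed asymptotic $j_\tau\,n^{-3/2}\varrho_{\tau,1}^{-n}$; the decimals $\varrho_{1,1}^{-1}=3.8782$ and $\varrho_{2,1}^{-1}=2.3361$ are obtained by solving $\phi_\tau(z)=\rho_1$ numerically.

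The principal obstacle lies in the first step: carrying out the combinatorial bookkeeping so that the relaxed inflation (with unrestricted $1$-arc slots) is a bijection onto $\tau$-canonical $\gamma$-diagrams containing $1$-arcs, and verifying that the resulting per-shape generating function is indeed independent of $m$ and collapses, after summation, to the strikingly simple expression~(\ref{E:oho}). Once the analogue of Lemma~\ref{L:GFbull} is in place, the remaining steps are structurally identical to the proof of Theorem~\ref{T:genus}.
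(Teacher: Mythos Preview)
Your proposal is correct and follows essentially the same route as the paper. The paper's own proof is extremely terse: it records only the analogue of Lemma~\ref{L:GFbull}, namely that for a shape $\lambda$ with $s$ arcs the per-shape generating function becomes
\[
(1-z)^{-1}\left(\frac{z^{2\tau}}{(1-z^2)(1-z)^2-(2z-z^2)z^{2\tau}}\right)^s,
\]
with the $z^m$ factor dropped, and leaves the summation over shapes, the algebraicity argument, and the asymptotics implicit by analogy with Theorem~\ref{T:genus}. Your three steps spell out exactly these implicit parts; in particular your identity $\sum_s N_s\,(x/(1-x))^s={\bf H}_\gamma(x)$ is precisely the specialization $e=1$ of Lemma~\ref{L:GFbi-sh}, which is what the paper's machinery reduces to once the dependence on $m$ disappears.
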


\begin{proof}
Let $\lambda$ be a fixed $\gamma$-shape with $s\geq 1$ arcs and
$m\geq 0$ 1-arcs. Then the generating function of $\tau$-canonical
$\gamma$-diagrams containing $1$-arcs that have shape $\lambda$
containing 1-arcs is given by
$$
\widetilde{{\bf G}}_{\tau,\gamma}(z)=(1-z)^{-1}\left(\frac{z^{2\tau}}
{(1-z^2)(1-z)^2-(2z-z^2)z^{2\tau}}\right)^s .$$
\end{proof}

\newpage
\section{Discussion}\label{S:discuss}

The symbolic approach based on $\gamma$-matchings allows not only to
compute the generating function of canonical $\gamma$-structures.
On the basis of Theorem~\ref{T:genus} it is possible to obtain a
plethora of statistics of $\gamma$-structures by means of combinatorial
markers.

For instance, we can analogously compute the bivariate generating function
of $\tau$ canonical $\gamma$-structures over $n$ vertices, containing
exactly $m$ arcs, ${\bf A}_{\tau,\gamma}(z,t)$ as
\begin{equation}\label{E:qw}
{\bf A}_{\tau,\gamma}(z,t) = \frac{1}{u_\tau(z,t) z^2-z+1}{\bf H}_\gamma\left(
\frac{u_\tau(z,t)\; z^2}{(u_\tau(z,t) z^2-z+1)^2}\right)
\end{equation}
where $u_\tau(z,t)$ is given by
\begin{equation*}
u_\tau(z,t)=\frac{t\,(tz^2)^{\tau-1}}{(tz^2)^{\tau}-tz^2+1} \ .
\end{equation*}
This bivariate generating function is the key to obtain a central limit
theorem for the distribution of arc-numbers in $\gamma$-structures
\cite{Bender:73} on the basis of L\'{e}vy-Cram\'{e}r Theorem on limit
distributions \cite{Feller}.

Statistical properties of $\gamma$-structures play a key role for quantifying
algorithmic improvements via sparsifications \cite{Backofen, Mohl, Wexler}.
The key property here is the {\it polymer-zeta property}
\cite{polymer-zeta1, polymer-zeta2} which states that the probability
of an arc of length $\ell$ is bounded by $k\, \ell^c$, where $k$ is some
positive constant and $c>1$.
Polymer-zeta stems from the theory of self-avoiding walks \cite{SAW} and has
only been empirically established for the simplest class of RNA structures,
namely those of genus zero.
It turns out however, that the polymer-zeta property is genuinely a combinatorial
property of a structure class. Moreover our results allow to quantify the effect
of sparsifications of folding algorithms into $\gamma$-structures
\cite{fenix2bb,fenixnewsparsi}.

We finally remark that around 98\% of RNA pseudoknot structures catalogued in
databases are in fact canonical $1$-structures. RNA pseudoknot structures
like the
\textsc{HDV}-virus\footnote{{\small {\tt www.ekevanbatenburg.nl/PKBASE/PKB00075.HTML}}}
exhibiting irreducible shadows of genus two are relatively rare.

{\bf Acknowledgments.}
We want to thank Fenix W.D.~Huang for discussions and comments.
We furthermore acknowledge the financial support of the Future and Emerging
Technologies (FET) programme within the Seventh Framework Programme (FP7) for
Research of the European Commission, under the FET-Proactive grant agreement
TOPDRIM, number FP7-ICT-318121.



\bibliographystyle{plainnat}      

\newpage
\begin{table}
\caption{\small The exponential growth rates of
$\rho_{s,i}^{-1}$, for $1\le s,i\le 2$.}
\label{T:growthrate}      
\begin{tabular}{lllll}
\hline\noalign{\smallskip}
$(s, i)$ & $(1, 1)$ & $(2, 1)$ & $(1, 2)$ & $(2, 2)$ \\
\noalign{\smallskip}\hline\noalign{\smallskip}
$\rho_{s,i}^{-1}$ & 3.6005 & 2.2759 & 3.8846 & 2.3553 \\
\noalign{\smallskip}\hline
\end{tabular}
\end{table}

\end{document}